\newtheorem{thm}{Theorem}[section]
\newtheorem{prop}[thm]{Proposition}
\newtheorem{lemma}[thm]{Lemma}
\theoremstyle{definition}
\newtheorem{defin}[thm]{Definition}
\theoremstyle{remark}
\newtheorem{rem}[thm]{Remark}
\newtheorem{cor}[thm]{Corollary}
\newcommand{\iso}{\simeq}
\newcommand{\isomap}{\stackrel{\sim}{\to}}
\newcommand{\C}{\mathbb{C}}
\newcommand{\Z}{\mathbb{Z}}
\newcommand{\g}{\mathfrak{g}}
\newcommand{\bfr}{\mathfrak{b}}
\newcommand{\niltil}{\tilde{\mathcal{N}}}
\newcommand{\Sym}{\operatorname{Sym}}
\newcommand{\Acal}{\mathcal{A}}
\newcommand{\Bcal}{\mathcal{B}}
\newcommand{\Ccal}{\mathcal{C}}
\newcommand{\Ocal}{\mathcal{O}}
\newcommand{\Pcal}{\mathcal{P}}
\newcommand{\Ecal}{\mathcal{E}}
\newcommand{\Fcal}{\mathcal{F}}
\newcommand{\Dcal}{\mathcal{D}}
\newcommand{\Mcal}{\mathcal{M}}
\newcommand{\Ncal}{\mathcal{N}}
\newcommand{\Rcal}{\mathcal{R}}
\newcommand{\Tcal}{\mathcal{T}}
\newcommand{\Scal}{\mathcal{S}}
\newcommand{\Hcal}{\mathcal{H}}
\newcommand{\Xcal}{\mathcal{X}}
\newcommand{\Ycal}{\mathcal{Y}}
\newcommand{\Vcal}{\mathcal{V}}
\newcommand{\Wcal}{\mathcal{W}}
\newcommand{\Ical}{\mathcal{I}}
\newcommand{\Jcal}{\mathcal{J}}
\newcommand{\Gcal}{\mathcal{G}}
\newcommand{\QCoh}{\operatorname{QCoh}}
\newcommand{\Coh}{\operatorname{Coh}}
\newcommand{\Hom}{\operatorname{Hom}}
\newcommand{\pr}{{\operatorname{pr}}}
\newcommand{\coker}{\operatorname{coker}}
\newcommand{\module}{\operatorname{mod}}
\newcommand{\Res}{\operatorname{Res}}
\newcommand{\Ind}{\operatorname{Ind}}
\newcommand{\id}{\operatorname{Id}}
\newcommand{\MF}{\operatorname{MF}}
\newcommand{\Perf}{\operatorname{Perf}}
\newcommand{\symcal}{\mathcal{S}\! \operatorname{ym}}
\numberwithin{equation}{section}
\begin{document}

\title{Equivariant Matrix Factorizations and Hamiltonian reduction}
\author{Sergey Arkhipov}
\author{Tina Kanstrup}
\address{S.A. Matematisk Institut, Aarhus Universitet, Ny Munkegade, DK-8000 , Aarhus C, Denmark, email: hippie@math.au.dk} 
\address{T.K. Centre for Quantum Geometry of Moduli Spaces, Aarhus Universitet, Ny Munkegade, DK-8000 , Aarhus C, Denmark, email: tina.kanstrup@mail.dk}

\maketitle

\begin{abstract}
Let $X$ be a smooth scheme with an action of an algebraic  group $G$. We establish an equivalence of two categories related to the corresponding moment map $\mu : T^*X \to \g^*$ - the derived category of $G$-equivariant coherent sheaves on the derived fiber $\mu^{-1}(0)$ and the derived category of $G$-equivariant matrix factorizations on $T^*X \times \g$ with potential given by $\mu$.
\end{abstract}

\section{Motivation}
\subsection{ Affine Hecke category and derived loop groups.}
We start from an informal discussion of a material which is supposed to explain our interest in the category of equivariant matrix factorizations. We outline the context as well as the place
of our paper in a bigger project. 

The present paper is the third one in the series (see \cite{AK1}, \cite{AK2}). Recall that for a reductive algebraic group $G$ with a Borel subgroup $B$
we considered a monoidal triangulated category $\text{QCHecke}(G,B)$ -- the derived category of quasi-coherent sheaves on the stack $B\backslash G/B$. The category is a higher analog
of the affine 0-Hecke algebra considered by Kostant and Kumar in [KK]. 

Given a smooth scheme $X$ with an action of $B$ we constructed a monoidal action of $\text{QCHecke}(G,B)$ on the derived
category of $B$-equivariant quasi-coherent sheaves on $X$. The action is a categorification of the Demazure operators acting on the equivariant Grotehndieck group $K_B(X)$ and
consturcted by Harada et al.

A natural challenge is to upgrade the action of the categorified  affine 0-Hecke algebra to an action of a  categorification of the usual affine Hecke algebra. Such categorification is known under the 
name of the affine Hecke category  $\text{Hecke}_{\text{aff}}$. One realization of the affine Hecke category is due to Bezrukavnikov who identified $\text{Hecke}_{\text{aff}}$ with the derived category of
$G$-equivariant coherent sheaves on the Steinberg variety $St_G$. 

Ben-Zvi and Nadler noticed that the latter category has a meaning in the realm of Derived Algebraic Geometry puting it in the context strikingly similar to the one for $\text{QCHecke}(G,B).$
Namely, one considers the derived group schemes of topological  loops with values in $G$ (resp., in $B$) denoted by $L_{\text{top}}(G)$ (resp., by $L_{\text{top}}(B))$. Ben-Zvi and Nadler
interpret $\text{Hecke}_{\text{aff}}$ as $\text{QCHecke}(L_{\text{top}}(G),L_{\text{top}}(B))$. Thus to imitate formally our consturction of Demazure functors for a $B$-scheme $X$, one should define
a category of $L_{\text{top}}(B)$-equivariant coherent sheaves on $L_{\text{top}}(X)$.

The goal of the present paper is to work out a down-to-Earth approach to a category playing the role of 
$D^b(\Coh^{L_{\text{top}}(B)}(L_{\text{top}}(X)))$. 
In the fourthcoming paper 
\cite{AK4} we will construct a categorical braid group action on our model for 
$D^b(\Coh^{L_{\text{top}}(B)}(L_{\text{top}}(X)))$. 

\subsection{From derived loop groups to  Hamiltonian reduction.}
Let us informally outline the ideas leading to our definition of the category in question.
\begin{enumerate}
\item
 For a smooth scheme $X$, the structure sheaf for $L_{\text{top}}(X)$ is known to be the Hochschild homology complex for $\mathcal{O}_X$. Hochschild-Kostant-Rosenberg theorem
identifies the complex with the sheaf of DG-algebras of differential forms $\Omega_X$, with zero differential. Thus one can think of the category $\Coh(L_{\text{top}}(X))$ as of the category of
DG-modules over $\Omega_X$. Koszul duality provides a bridge from this category to the derived category of coherent sheaves on $T^*X$.
\item
 To impose a $L_{\text{top}}(B)$-equivariance structure on an object of $D^b(\Coh(T^*X))$ we notice that the DG-coalgebra $\mathcal{O}_{L_{\text{top}}(B)}$ is a semi-direct product of 
$\mathcal{O}_B$ and of $\Lambda(\mathfrak{b}^*)$. The $\mathcal{O}_B$-coaction stands for a $B$-equivariance structure. The coaction of the exterior coalgebra after dualisation becomes 
a structure of a $\Lambda(\mathfrak{b})$-DG-module. 
\item
 Summing up, we obtain a $B$-equivariant sheaf of DG-algebras 
$\Lambda(\mathfrak{b})\otimes \mathcal{O}_{T^*X}$ 
and the category of $B$-equivariant sheaves of DG-modules
over it. Notice that the coaction of the exterior coalgebra on $\mathcal{O}_{T^*X}$ is non-trivial. This results in a differential in
$\Lambda(\mathfrak{b})\otimes \mathcal{O}_{T^*X}$ which can be identified with the Koszul type differential on
$$
\left(\Lambda(\mathfrak{b})\otimes 
\Sym(\mathfrak{b})
\right)\otimes_{\Sym(\mathfrak{b})}
\mathcal{O}_{T^*X}.
$$ 
Consider the moment map $\mu:\ T^*X\to
 \mathfrak{b}^*$.
The  sheaf of DG-algebras  above is a representative for the structure sheaf of $\mu^{-1}(0)$ understood as the derived fiber product $T^*X\times_{ \mathfrak{b}^*}\{0\}$.
We end up with a category which has a completely recognizable flavour. Namely, we define the category $D^b(\Coh^{L_{\text{top}}(B)}(L_{\text{top}}(X)))$ to be the derived category 
of coherent sheaves on the stack
$\mu^{-1}(0)/B$.

\end{enumerate}

Notice that $\mu^{-1}(0)$ is a derived scheme, but now it is described very explicitly as an equivariant quasi-coherent sheaf of DG-algebras on $T^*X$. The definition of the derived category of 
equivariant DG-modules over it does not require any use of infinity-categories, so the story belongs to Derived Algebraic Geometry in the mildest possible sense.

\subsection{Hamiltonian reduction and equivariant matrix factorizations.} Recall that in Poisson Geometry, Hamiltonian reduction is a way to describe the cotangent bundle to the quotient
variety $X/G$ for a $G$-variety $X$ with an action of a Lie group $G$ which is good enough, e.g. close to be free. One considers the moment map $\mu:\ T^*X\to \mathfrak{g}^*$, hopes that
the $G$-action on $\mu^{-1}(0)$ is as good as possible (this is certainly true e.g. in the case when $X\to X/G$ is a principal $G$-bundle) and identifies $T^*(X/G)$ with the Hamiltonian
reduction space $\mu^{-1}(0)/G$.

The moment map plays the crucial role in our considerations too. Let $G$ be an algebraic group acting on a smooth scheme $X$. We have the following three (equivalent)  incarnations of the moment map
for the $G$-action:
\begin{enumerate}
\item
The moment map $\mu$ comes from the map of rings 
$\Sym(\mathfrak{g})\to \Gamma(T^*X,\mathcal{O}_{T^*X})$.
\item
The moment map is a morphism from $\mathfrak{g}$ to the vector space of global vector fields on $X$. This leads to a differential in the sheaf of graded algebras $\Lambda(\mathfrak{b})\otimes \mathcal{O}_{T^*X}$. Notice that this way we obtain the concrete model for the structure sheaf of the derived scheme $\mu^{-1}(0)$ discussed above.
\item
The moment map times identity is a $G$-equivariant morphism $T^*X \times \g \to \g^* \times \g$. Composing with the natural pairing this defines a $G$-invariant global function on $T^*X \times \mathfrak{g}$.
\end{enumerate}

Each of the incarnations of the moment map above leads to a triangulated category. 
\begin{enumerate}
\item
Suppose that $X$ is a principal $G$-bundle over $Y=X/G$. One considers the derived category of coherent sheaves on the Hamiltonian reduction of $T^*X$ which is isomorphic to $T^*Y$.
\item
More generally, using the second incarnation of the moment map, one defines the derived category of $G$-equivariant DG-modules over the sheaf of  DG-algebras 
$\Lambda(\mathfrak{b})\otimes \mathcal{O}_{T^*X}$.

\item
Given a $G$-scheme $Z$ and a global $G$-invariant function $h$ on it one defines the derived category of equivariant matrix factorizations with the potential $h$.
\end{enumerate}
The present paper establishes the equivalences between the three approaches and the three triangulated categories.

\subsection{The structure of the paper.} In Section 2, for a scheme $X$ with an action of an algebraic group $G$,  we recall the setting of $G$-equivariant sheaves of DG-algebras on $X$.
We define the corresponding derived categories of $G$-equivariant quasicoherent sheaves of DG-modules and the functors of inverse and direct image for them. 

In Section 3, we recall the equivariant Linear Koszul Duality due to Mirkovic and Riche. This is the main technical tool to connect the categories 2) and 3) discussed above.

In Section 4, we consider a more general setup. Let $X$ be a smooth complex variety with an action of a reductive algebraic group and  let $\pi : E \to X$ be a $G$-equivariant vector bundle. Fix a regular $G$-equivariant global section $s$ of that vector bundle. The zero scheme of $s$ is denoted by $Y$. The dual vector bundle is denoted by $\pi^\vee : E^\vee \to X$. This defines a $G$-invariant function
\begin{gather} 
W: E^\vee \to E^\vee \times_X E \stackrel{\langle \;, \: \rangle}{\longrightarrow} \C,\\
 a_x \mapsto \langle ((\pi^\vee)^*s)(a_x), a_x \rangle.
\end{gather}
Using equivariant Linear Koszul duality we establish an equivalence between the equivariant singularity category $D_{\text{sg}}^{G \times \C^*}(W^{-1}(0))$ and $D^b(\Coh^G(Y))$. This is an extension to the equivariant setting of a result by Isik.
 
In Section 5, we recall the definition of the absolute derived category of matrix factorizations on a stack $X$ with potential $W$. It is denoted by $\text{DMF}(X, W)$. Polishchuk and Vaintrob proved that under some assumptions $\text{DMF}(X, W)$ is equivalent to $D_{\text{sg}}(W^{-1}(0))$.

In Section 6, we combine the results in section 4 and 5 and obtain an equivalence of categories
\[ \operatorname{DMF}_{G \times \C^*}(E^\vee,W) \iso D^b(\Coh^G(Y)). \]
This also hold for non-reductive $G$. Lastly, we apply this result  to the Hamiltonian reduction setting where $E^\vee$ is the trivial vector bundle $\pi : T ^*X \times \g^* \to T^*X$ and the section is the moment map. This establishes the equivalence between the categories 1) and 3)
\[ \operatorname{DMF}_{G \times \C^*}(T^*X \times \g,W) \iso D^b(\Coh(T^*Y)). \]

\subsection{Acknowledgments}
We would like to thank V. Baranovsky, A. I. Efimov and A. Polishchuk for inspiring discussions.

Both authors' research was supported in part by center of excellence grants "Centre for Quantum Geometry of Moduli Spaces" and by FNU grant "Algebraic Groups and Applications".

\section{Sheaves of DG-algebras and DG-modules over them}

We recall the basic definitions of equivariant quasi-coherent sheaves of differential graded modules.

\begin{defin}
Let $\Acal=\bigoplus_{p \in \Z} \Acal^p$ be a sheaf of $\Z$-graded $\Ocal_X$-algebras on a complex algebraic variety $X$. Denote the multiplication map by $\mu_\Acal : \Acal \otimes_{\Ocal_X} \Acal \to \Acal$.
\begin{enumerate}
\item The sheaf $\Acal$ is a sheaf of dg-algebras  if it is provided with an endomorphism of $\Ocal_X$-modules $d_\Acal :\Acal \to \Acal$ of degree 1, such that $d_\Acal \circ d_\Acal=0$, satisfying the following formula on $\Acal^i \otimes \Acal$ for any $i \in \Z$:
\[ d_{\Acal} \circ \mu_{\Acal}=\mu_{\Acal} \circ (d_\Acal \otimes \id_{\Acal})+(-1)^i \mu_{\Acal} \circ (\id_{\Acal^p} \otimes d_\Acal). \]

\item A morphism of sheaves of dg-algebras on the same scheme is a morphism of sheaves of graded algebras commuting with the differentials.

\item A morphism of dg-algebras on different schemes $f : (X,\Acal) \to (Y,\Bcal)$ is the data of a morphism of schemes $f_0 : X \to Y$, and a morphism of sheaves of dg-algebras $f_0^*\Bcal \to \Acal$.

\item Define the opposite dg-algebra $\Acal^{op}$ to have the same elements and differential as $\Acal$ but a new multiplication $a \circ b:=(-1)^{\deg(a) \deg(b)}ba$. The sheaf of dg-algebras $\Acal$  is called graded-commutative if the identity map $\id : \Acal \to \Acal^{op}$ is an isomorphism of sheaves of dg-algebras.

\item A $\Acal$-dg-module is a sheaf of $\Z$-graded left $\Acal$-modules $\Fcal$ on $X$ together with an endomorphism of $\Ocal_X$-modules $d_\Fcal : \Fcal \to \Fcal$ of degree 1, such that $d_\Fcal \circ d_\Fcal = 0$ and satisfying the following formula on $\Acal^i \otimes_{\Ocal_X} \Fcal$ for $i \in \Z$, where $\alpha_\Fcal :\Acal \otimes_{\Ocal_X} \Fcal \to \Fcal$ is the action map:
\[d_\Fcal \circ \alpha_\Fcal =\alpha_\Fcal \circ (d_\Acal \otimes \id_\Fcal)+(-1)^i \alpha_\Fcal \circ (\id_{\Acal^i} \otimes d_\Fcal).\]

\item A morphism of $\Acal$-dg-modules is a morphism of sheaves of graded $\Acal$-modules commuting with differentials.

\item A quasi-coherent dg-sheaf $\Fcal$ on $(X,\Acal)$ is an $\Acal$-dg-module such that $\Fcal^i$ is a quasi-coherent $\Ocal_X$-module for all $i \in \Z$.
\end{enumerate}
\end{defin}

\begin{defin}
Let $G$ be a complex reductive algebraic group acting on a complex algebraic variety $X$. Let $\Acal$ be a sheaf of dg-algebras on $X$ and assume that $\Acal_i$ is $G$-equivariant for all $i \in \Z$ and that the multiplication and differential are $G$-equivariant. A $\Acal$-dg-module $\Fcal$ is $G$-equivariant if $\Fcal_i$ is $G$-equivariant for all $i \in \Z$ and the differential and action morphisms are $G$-equivariant.
\end{defin}

The category of $G$-equivariant quasi-coherent left dg-modules over the dg-algebra $\Acal$ is denoted by $\Ccal\QCoh^G(\Acal)$. The definition of the equivariant derived category is analogous to the non-equivariant derived category as defined in \cite{BL}. We recall the definitions.

\begin{defin}
\begin{enumerate}
\item Two morphisms $f,g : \Mcal \to \Ncal$ in $\Ccal\QCoh^G(\Acal)$ are homotopic if there exists a morphism of modules over the graded ring $\Acal$ (but not necessarily a morphism of $\Acal$-modules) $s : \Mcal \to \Ncal[-1]$ s.t.
\[ f-g=s d_\Mcal+d_\Ncal s. \]
We write $f \sim g$.
\item The homotopy category $H^0(\QCoh^G(\Acal))$ has the same objects as $\Ccal\QCoh^G(\Acal)$ and morphisms
\[ \Hom_{H^0(\QCoh^G(\Acal))}(\Mcal,\Ncal) :=\Hom_{\Ccal\QCoh^G(\Acal)}(\Mcal,\Ncal)/\{\text{morphisms }\sim 0\}. \]
\item The cohomology of $\Mcal \in \Ccal\QCoh^G(\Acal)$ is the graded sheaf of $\Ocal_X$-modules $\Hcal(\Mcal)=\ker(d_\Mcal)/\text{im}(d_\Mcal)$. $\Mcal$ is acyclic if $\Hcal(\Mcal)=0$.
\item A morphism is a quasi-isomorphism if it induces an isomorphism on cohomology. The derived category $\Dcal\QCoh^G(\Acal)$ is the localization of $H^0(\QCoh^G(\Acal))$ with respect to quasi-isomorphisms.
\item A coherent dg-module $\Mcal$ over $\Acal$ is a quasi-coherent dg-sheaf whose cohomology sheaf $\Hcal(\Mcal)$ is coherent over $\Hcal(\Acal)$. The full subcategory of $\Dcal\QCoh^G(\Acal)$ whose objects are coherent is denoted by $\Dcal\Coh^G(\Acal)$.
\item The full subcategory of $\Dcal\QCoh^G(\Acal)$ consisting of objects whose cohomology is bounded and coherent as a $\Ocal_X$-module is denoted by $\Dcal^{bc} \QCoh^G(\Acal)$.
\end{enumerate}
\end{defin}

\begin{rem}  \label{OX}
Consider $\Ocal_X$ as a dg-algebra with $\Ocal_X$ in degree zero and 0 elsewhere. Then
\[ \Dcal\QCoh^G(\Ocal_X) \iso D^b(\QCoh^G(X)), \qquad \Dcal\Coh^G(\Ocal_X) \iso D^b(\Coh^G(X)). \]
\end{rem}

\subsection{Functors}
Let $G$ be a reductive algebraic group acting on a complex algebraic variety $X$. To be able to define the derived functors we will assume that the following property hold:
\begin{gather}
\text{For any } \Fcal\in \Coh^G(X), \text{ there exists } \Pcal \in \Coh^G(X)\\
\text{which is flat over } \Ocal_X \text{ and a surjection } \Pcal \twoheadrightarrow \Fcal \text{ in } \Coh^G(X). \label{Assump}
\end{gather}

\begin{rem} \label{AmpleProp}
Property \ref{Assump} is satisfied e.g. when $X$ admits an ample family of line bundles in the sense of \cite[Definition 1.5.3]{VV} or when $X$ is normal and quasi-projective (see \cite[Proposition 5.1.26]{CG}).
\end{rem}

\begin{defin}
Let $\Acal$ be an equivariant sheaf of dg-algebras on $X$. If $\Acal$ is quasi-coherent, non-positively graded and graded-commutative then the pair $(X,\Acal)$ is called a dg-scheme.
\end{defin}

From now on we will always assume that we are working with a dg-scheme. In particular, the category of left $\Acal$-dg-modules is equivalent to the category of right $\Acal$-dg-modules (see \cite[10.6.3]{BL}). Furthermore, we will assume that $\Acal$ is locally finitely generated over $\Acal^0$, that $\Acal^0$ is locally finitely generated as an $\Ocal_X$-algebra, and that $\Acal$ is K-flat as a $\mathbb{G}_m$-equivariant $\Acal^0$-dg-module. 

The last assumption is justified by the following observation in \cite[Section 2.2]{MR3}: If $A$ is the $G$-equivariant affine scheme over $X$ such that the push-forward of $\Ocal_A$ to $X$ is $\Acal^0$, then there exists a $\mathbb{G}_m \times G$-equivariant quasi-coherent $\Ocal_A$-dg-algebra $\Acal'$ whose direct image to $X$ is $\Acal$ and there is an equivalence of categories $\Ccal \QCoh^G(\Acal') \iso \Ccal \QCoh^G(\Acal)$. Using this trick one can always reduce to the situation in which $\Acal$ is $\Ocal_X$-coherent and $K$-flat as an $\Ocal_X$-dg-module.

\begin{lemma} \cite[Lemma 2.5]{MR3}\label{EquiHom}
Let $\Acal$ be as above. For any $\Mcal,\Ncal \in \Dcal^{bc}(\QCoh^{G \times \C^*}(\Acal))$ the $\C$-vector space $\Hom_{\Dcal^{bc}(\QCoh^{\C^*}(\Acal))}(\Mcal,\Ncal))$ has a natural structure of an algebraic $G$-module. Moreover, the natural morphism
\[ \Hom_{\Dcal^{bc}(\QCoh^{G \times \C^*}(\Acal))}(\Mcal,\Ncal) \to (\Hom_{\Dcal^{bc}(\QCoh^{\C^*}(\Acal))}(\Mcal,\Ncal))^G \]
induced by the forgetful functor is an isomorphism.
\end{lemma}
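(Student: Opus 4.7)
The argument is a derived-category version of the classical averaging principle for reductive groups. I split the plan into four steps.

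\textbf{Step 1: Define the $G$-action on the non-equivariant Hom.}  Given $G$-equivariant objects $\Mcal,\Ncal$ with equivariance structures $g \mapsto g^{\Mcal}$ and $g \mapsto g^{\Ncal}$, and a morphism $f : \Mcal \to \Ncal$ in $\Ccal\QCoh^{\C^*}(\Acal)$, set $g \cdot f := g^{\Ncal} \circ f \circ (g^{\Mcal})^{-1}$.  A routine verification shows this descends through homotopy and through localisation at quasi-isomorphisms, giving a (set-theoretic) $G$-action on $\Hom_{\Dcal^{bc}(\QCoh^{\C^*}(\Acal))}(\Mcal,\Ncal)$.  The forgetful functor clearly lands in the $G$-fixed part.

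\textbf{Step 2: Replace $\Mcal$ by a $G$-equivariant K-flat representative.}  Using Assumption~\eqref{Assump} and the reduction to the $\Ocal_X$-coherent, $K$-flat case explained after Remark~\ref{AmpleProp}, one constructs inductively a $G\times\C^*$-equivariant K-flat dg-module $\Pcal$ together with a quasi-isomorphism $\Pcal \to \Mcal$ whose underlying terms $\Pcal^i$ are colimits of coherent $G$-equivariant sheaves on $X$.  With this replacement one may compute
\[
\Hom_{\Dcal^{bc}(\QCoh^{\C^*}(\Acal))}(\Mcal,\Ncal)\ =\ H^0\bigl(\Hom^{\bullet}_{\Acal}(\Pcal,\Ncal)\bigr),
\]
and analogously for the $G\times\C^*$-equivariant Hom, where one simply replaces $\Hom^\bullet_{\Acal}$ by its $G$-equivariant counterpart.

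\textbf{Step 3: Verify algebraicity of the $G$-action.}  The complex $\Hom^{\bullet}_{\Acal}(\Pcal,\Ncal)$ is a complex of (in general infinite-dimensional) $G$-representations, but each piece is built from $\Hom$ between quasi-coherent $G$-sheaves on $X$, so it is naturally a filtered colimit of finite-dimensional algebraic $G$-modules, i.e.\ an algebraic $G$-module in the rational sense.  The bounded coherent cohomology hypothesis on $\Mcal,\Ncal$ implies that the relevant $H^0$ is finite-dimensional; taking cohomology preserves rationality, so the induced $G$-action on $\Hom_{\Dcal^{bc}(\QCoh^{\C^*}(\Acal))}(\Mcal,\Ncal)$ is algebraic, as claimed in the first assertion of the lemma.

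\textbf{Step 4: Pass to $G$-invariants using reductivity.}  Since $G$ is reductive, the invariants functor $(-)^G$ on the category of rational $G$-modules is exact, hence commutes with the cohomology of complexes of rational $G$-modules.  Moreover, the $G$-equivariant Hom-complex is exactly the $G$-invariants of the non-equivariant one:
\[
\Hom^{\bullet}_{\Acal,\,G}(\Pcal,\Ncal)\ =\ \bigl(\Hom^{\bullet}_{\Acal}(\Pcal,\Ncal)\bigr)^{G}.
\]
Combining these two facts with Step 2 yields
\[
\Hom_{\Dcal^{bc}(\QCoh^{G\times\C^*}(\Acal))}(\Mcal,\Ncal)\ \isomap\ \bigl(\Hom_{\Dcal^{bc}(\QCoh^{\C^*}(\Acal))}(\Mcal,\Ncal)\bigr)^{G},
\]
which is the desired isomorphism.

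The main obstacle is Step 2: one must ensure that the K-flat replacement used to compute derived Homs can be chosen $G\times\C^*$-equivariantly and term-wise coherent-by-colimit, so that both Step 3 (algebraicity) and Step 4 (commuting $(-)^G$ past cohomology) actually apply.  Once that is in place, the remainder is formal reductive-group bookkeeping.
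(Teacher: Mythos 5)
First, a point of order: the paper itself does not prove this lemma --- it is quoted verbatim from Mirkovi\'c--Riche \cite[Lemma 2.5]{MR3} and used as a black box, so there is no in-paper argument to compare yours against. Your four-step outline does reproduce the standard strategy behind the cited result (conjugation action on the non-equivariant Hom, an equivariant resolution making the Hom-complex a complex of rational $G$-modules, and exactness of $(-)^G$ for reductive $G$), and Steps 1 and 4 are fine as stated.

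There are, however, two genuine soft spots in Steps 2--3. First, a K-flat resolution $\Pcal \to \Mcal$ is the right tool for derived \emph{tensor products}, not derived Homs: the identification $\Hom_{\Dcal}(\Mcal,\Ncal) = H^0(\Hom^{\bullet}_{\Acal}(\Pcal,\Ncal))$ requires $\Pcal$ to be K-projective in the relevant sense, or else requires replacing $\Ncal$ by a K-injective object (as in part (3) of the functoriality lemma quoted just after this one). Moreover, whichever replacement you use must compute the Hom simultaneously in the $G\times\C^*$-equivariant and the $\C^*$-equivariant categories; this compatibility with the forgetful functor is not automatic (an equivariant K-injective need not stay K-injective after forgetting $G$) and is really the substantive content of the lemma. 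Second, your rationality argument in Step 3 runs the wrong way: $\Hom$ \emph{out of} a filtered colimit is an inverse \emph{limit} of the individual Hom spaces, and an inverse limit of rational $G$-modules need not be rational (the $G$-action can fail to be locally finite). To get an algebraic $G$-module one should instead use the bounded-coherent-cohomology hypothesis together with the ample family from assumption \eqref{Assump} to build the resolution out of finitely many terms of the form $\Acal \otimes_{\Ocal_X} (L \otimes V)$ with $L$ an equivariant line bundle and $V$ a finite-dimensional $G$-module, so each term of the Hom-complex is visibly a rational $G$-module. Both repairs are exactly what \cite{MR3} supplies; once they are in place your Step 4 goes through.
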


One can define the usual functors on $\Ccal \QCoh^G(\Acal)$. We define the internal $\Hcal om$ functor
\[ \Hcal om_\Acal^G(-,-) : \Ccal \QCoh^G(\Acal) \times \Ccal \QCoh^G(\Acal) \to \Ccal \QCoh^G(\Ocal_X) \]
For $\Mcal, \Ncal \in \Ccal \QCoh^G(\Acal)$ the sheaf of $\Ocal_X$-dg-modules $\Hcal om^G_\Acal(\Mcal,\Ncal)$ is the graded sheaf of $\Ocal_X$-modules with the $i$-th component being local equivariant homomorphisms of graded $\Acal$-modules $\Mcal \to \Ncal[i]$ (not necessarily commuting with the differentials). For $\phi \in \Hcal om^G_\Acal(\Mcal,\Ncal)^i$ the differential is given by
\[ d(\phi)=d_\Mcal \circ \phi-(-1)^i \phi \circ d_\Ncal. \]

We also have a tensor product
\[ - \otimes_{\Acal} - : \Ccal \QCoh^G(\Acal) \times \Ccal \QCoh^G(\Acal) \to \Ccal \QCoh^G(\Ocal_X) \] 
The sheaf of $\Ocal_X$-dg-modules $\Fcal \otimes_{\Acal} \Gcal$ is graded in the natural way and on local sections of $\Fcal^i \otimes_\Acal \Gcal$ the differential is given by
\[ d(f \otimes g)=d(f)\otimes g+(-1)^i f \otimes d(g). \]
It is equivariant with respect to the diagonal $G$-action.\\

Let $f:(X,\Acal) \to (Y, \Bcal)$ be a $G$-equivariant morphism of dg-schemes. This defines a the morphism of sheaves of dg-algebras since, by adjunction, the morphism $f^* \Bcal \to \Acal$ corresponds to a morphism $\Bcal \to f_* \Acal$. We define the direct image functor to be restriction of scalars using this map.
\[ f_* : \Ccal \QCoh^G(\Acal) \to \Ccal \QCoh^G(\Bcal). \]
We can also define an inverse image functor using the tensor product
\begin{gather} 
f^* : \Ccal\QCoh^G(\Bcal) \to \Ccal\QCoh^G(\Acal), \\
 \Fcal \to \Acal \otimes_{f^* \Bcal} f^* \Fcal.
 \end{gather}

\begin{lemma} \cite[Lemma 2.7 and Prop. 2.8]{MR3}
Assume that $(X,G)$ satisfies the above assumptions and let $f: (X,\Acal) \to (Y,\Bcal)$ be a $G \times \mathbb{G}_m$-equivariant morphism of dg-schemes. Then
\begin{enumerate}
\item For any object $\Mcal \in \Ccal \QCoh^G(\Bcal)$, there exits an object $\Pcal \in \Ccal \QCoh^G(\Bcal)$, which is $K$-flat as a $\Bcal$-dg-module and a quasi-isomorphism of $G \times \mathbb{G}_m$-equivariant $\Bcal$-dg-modules $\Pcal \to \Mcal$.
\item The functor of pull-back admits a derived functor
\[ Lf^* : \Dcal \QCoh^G(\Bcal) \to \Dcal \QCoh^G(\Acal) \]
and the following diagram is commutative
\[ \xymatrix{\Dcal \QCoh^G(\Bcal) \ar[d]^{\text{For}} \ar[r]^{Lf^*} & \Dcal \QCoh^G(\Acal) \ar[d]^{\text{For}} \\ 
\Dcal \QCoh^G(\Bcal) \ar[r]^{Lf^{*}} & \Dcal \QCoh^G(\Acal)} \]
\item For any $\Ncal \in \Ccal^+\QCoh^G(\Acal)$, there exists an object $\Ical \in \Ccal^+\QCoh^G(\Acal)$ which is $K$-injective in $\Ccal \QCoh^G(\Acal)$ and a quasi-isomorphism $\Ncal \to \Ical$.
\item The functor of push-forward admits a derived functor
\[Rf_*: \Dcal^+\QCoh^G(\Acal) \to \Dcal \QCoh^G(\Bcal)\]
and the following diagram is commutative up to isomorphism
\[ \xymatrix{\Dcal^+\QCoh^G(\Acal) \ar[d]^{\text{For}} \ar[r]^{Rf_*} & \Dcal \QCoh^G(\Bcal) \ar[d]^{\text{For}} \\ 
\Dcal^+\QCoh(\Acal) \ar[r]^{Rf_*} & \Dcal \QCoh(\Bcal)} \]
\end{enumerate}
\end{lemma}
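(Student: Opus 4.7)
My plan is to follow the standard Spaltenstein strategy for constructing unbounded derived functors, adapted to the equivariant dg-scheme framework using the reduction recalled just before the lemma. First I would replace $\Bcal$ (resp.\ $\Acal$) by the associated dg-algebra on the relative Spec of $\Bcal^0$ (resp.\ $\Acal^0$), thereby reducing to the situation in which $\Bcal$ and $\Acal$ are $\Ocal$-coherent and K-flat as $\Ocal$-dg-modules; the crucial input then is assumption (\ref{Assump}), which guarantees enough $G$-equivariant $\Ocal$-flat quasi-coherent sheaves on $X$ and $Y$. The $\mathbb{G}_m$-equivariance, which records the cohomological grading, will be preserved at every stage by simply keeping track of the internal grading on all complexes and equivariant resolutions used below.

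For part (1), given $\Mcal \in \Ccal\QCoh^G(\Bcal)$, I would use (\ref{Assump}) termwise to produce a surjection $\Pcal \twoheadrightarrow \Mcal$ in $\QCoh^G(Y)$ from a $G$-equivariant $\Ocal_Y$-flat sheaf, tensor with $\Bcal$ to get a $\Bcal$-dg-module mapping to $\Mcal$, and iterate on the cone to produce an unbounded resolution whose components are of the form $\Bcal \otimes_{\Ocal_Y} \Pcal_i$ with each $\Pcal_i$ equivariant and $\Ocal_Y$-flat. Spaltenstein's totalization (truncating, constructing K-flat resolutions of the bounded-above truncations, and comparing the inverse system via the homotopy limit) yields a K-flat $G \times \mathbb{G}_m$-equivariant $\Bcal$-dg-module quasi-isomorphic to $\Mcal$. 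Part (2) is then formal: set $Lf^*(\Mcal) = \Acal \otimes_{f^*\Bcal} f^*\Pcal$, check independence of the resolution by the usual argument that quasi-isomorphisms between K-flat objects are preserved by $f^*$, and observe that the square with the forgetful functor commutes because K-flatness for $G \times \mathbb{G}_m$-equivariant objects implies K-flatness after forgetting the $G$-action.

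For part (3), since we work in $\Ccal^+$ I would run the classical Cartan-Eilenberg inductive construction: at each cohomological step embed the relevant kernel into a $G$-equivariant injective object of $\QCoh^G(X)$ and then apply the coinduction $\Hcal om_{\Ocal_X}(\Acal,-)$ to produce an injective $\Acal$-dg-module, iterating upwards. Enough equivariant injectives exist because $G$ is reductive: the invariants functor on rational $G$-modules is exact, so a non-equivariant injective carrying a compatible $G$-action is injective in the equivariant category. Part (4) is then defined by $Rf_*(\Ncal) := f_*(\Ical)$ with $\Ical$ any such K-injective resolution. The main obstacle, as always, is the compatibility in part (4) with the forgetful functor: one must verify that an equivariant K-injective remains K-injective non-equivariantly. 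The cleanest route is Lemma \ref{EquiHom}, which identifies equivariant $\Hom$ with $G$-invariants of non-equivariant $\Hom$; combined with exactness of invariants for reductive $G$, this transfers the acyclicity condition defining K-injectivity from the equivariant $\Hom$-complex to the non-equivariant one, yielding the required commutative square.
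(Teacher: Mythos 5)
The first thing to say is that the paper does not prove this lemma at all: it is quoted directly from Mirkovi\'c--Riche \cite[Lemma 2.7 and Prop.~2.8]{MR3}, so there is no in-paper argument to compare against. Your overall strategy --- reduce via the relative-Spec trick to the case where the dg-algebras are $\Ocal$-coherent and K-flat, build K-flat resolutions from assumption \eqref{Assump} by tensoring flat equivariant $\Ocal$-modules with $\Bcal$, and build K-injective resolutions of bounded-below objects by coinduction $\Hcal om_{\Ocal_X}(\Acal,-)$ from equivariant injectives --- is indeed the strategy of \cite{MR3} and of \cite{BR}, and parts (1)--(3) are essentially right. One small slip: the Spaltenstein totalization for K-\emph{flat} resolutions of unbounded complexes is a homotopy \emph{colimit} over a direct system of truncations; the inverse system and homotopy limit belong to the K-injective construction.

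The genuine gap is in your argument for the commutative square in part (4). Lemma \ref{EquiHom} expresses the \emph{equivariant} Hom as the $G$-invariants of the non-equivariant Hom, so exactness of $(-)^G$ for reductive $G$ lets you pass acyclicity \emph{from} the non-equivariant Hom-complex \emph{to} the equivariant one --- the wrong direction for your purposes. Worse, K-injectivity of $\operatorname{For}(\Ical)$ in $\Ccal\QCoh(\Acal)$ is tested against \emph{all} acyclic non-equivariant $\Acal$-dg-modules, most of which carry no equivariant structure, so no statement about equivariant Hom-complexes can establish it. What is actually needed, and what \cite{MR3} and \cite{BR} verify, is that the \emph{specific} resolution you construct --- with terms coinduced from injectives of $\QCoh^G(X)$ --- has underlying non-equivariant terms acyclic for $f_*$ (for instance because every injective of $\QCoh^G(X)$ is a direct summand of an averaging of a non-equivariant injective, and averaging is a push-forward along the affine morphism $G\times X \to X$); since the complex is bounded below, applying $f_*$ to such a resolution computes the non-equivariant $Rf_*$. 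A milder instance of the same issue occurs in part (2): equivariant K-flatness does not formally imply K-flatness after forgetting, but the resolutions built from $\Bcal\otimes_{\Ocal_Y}\Pcal_i$ with $\Pcal_i$ flat over $\Ocal_Y$ are K-flat in the non-equivariant sense by construction, which is what the commuting square actually requires.
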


\begin{lemma}\cite[Prop. 5.2.1]{BR} \label{QuasiIso}
Let $H$ be an algebraic group (not necessarily reductive) and $f : (X,\Acal) \to (X,\Bcal)$ a $H$-equivariant quasi-isomorphism of complex algebraic $H$-varieties satisfying the conditions above. Then the pull-back and push-forward functors induce equivalences of categories
\[ \Dcal \QCoh^H(\Acal) \iso \Dcal \QCoh^H(\Bcal).  \]
The equivalence restricts to an equivalence 
\[\Dcal \Coh^H(\Acal) \iso \Dcal \Coh^H(\Bcal).\]
\end{lemma}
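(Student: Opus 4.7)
The plan is to exploit that the push-forward $f_\ast$ is simply restriction of scalars along the dg-algebra quasi-isomorphism $\phi : \Bcal \to \Acal$, so it is exact and descends to derived categories without derivation, while $Lf^\ast$ is computed by applying $\Acal \otimes_\Bcal -$ to an $H$-equivariant K-flat resolution over $\Bcal$. The underived adjunction $(f^\ast, f_\ast)$ is formal, so the derived adjunction $(Lf^\ast, f_\ast)$ is inherited, and it suffices to verify that its unit and counit are quasi-isomorphisms.

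For the unit, given $\Ncal \in \Dcal \QCoh^H(\Bcal)$, I would choose an $H$-equivariant K-flat resolution $\Pcal \to \Ncal$ of $\Bcal$-dg-modules. The unit $\Ncal \to f_\ast Lf^\ast \Ncal$ is then represented on $\Pcal$ by
\[ \Pcal \iso \Bcal \otimes_\Bcal \Pcal \xrightarrow{\phi \otimes \id_\Pcal} \Acal \otimes_\Bcal \Pcal. \]
Since $\Pcal$ is K-flat over $\Bcal$ and $\phi$ is a quasi-isomorphism, this map is a quasi-isomorphism, and hence the unit is invertible in $\Dcal \QCoh^H(\Bcal)$.

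For the counit, given $\Mcal \in \Dcal \QCoh^H(\Acal)$, I would pick an $H$-equivariant K-flat resolution $\Pcal \to f_\ast \Mcal$ as a $\Bcal$-dg-module. The counit $Lf^\ast f_\ast \Mcal \to \Mcal$ is then represented by the composition $\Acal \otimes_\Bcal \Pcal \to \Acal \otimes_\Bcal f_\ast \Mcal \to \Mcal$, where the last arrow is the $\Acal$-action. In the commutative triangle with vertices $\Pcal$, $\Acal \otimes_\Bcal \Pcal$, $\Mcal$, the side $\Pcal \to \Mcal$ (through $f_\ast \Mcal$) is a quasi-isomorphism by choice of the resolution, and the side $\Pcal \to \Acal \otimes_\Bcal \Pcal$ is a quasi-isomorphism by the previous paragraph; the two-out-of-three property forces the counit $\Acal \otimes_\Bcal \Pcal \to \Mcal$ to be a quasi-isomorphism.

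Finally, the coherent substatement follows because $\phi$ induces an isomorphism $\Hcal(\Bcal) \isomap \Hcal(\Acal)$ of cohomology algebras, so $\Hcal(\Mcal)$ is coherent over one iff it is coherent over the other; moreover $f_\ast$ does not alter the underlying $\Ocal_X$-structure, so the bounded-coherent subcategory is preserved on the nose. The main obstacle in this plan is arranging an $H$-equivariant K-flat resolution when $H$ is not reductive, since the preceding lemmas of Section 2 were stated under reductivity. This is handled by the reduction explained in Section 2, replacing $\Acal$ with its affine model over the relative spectrum $A \to X$, where standard equivariant semi-free constructions applied coordinate-wise produce the required resolutions without invoking reductivity of $H$.
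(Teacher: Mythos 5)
Your argument is correct and is essentially the proof of record: the paper itself gives no argument, citing \cite[Prop.~5.2.1]{BR}, and that proof is exactly the adjunction scheme you describe --- $f_*$ is restriction of scalars and hence already derived, $Lf^*$ is computed on an equivariant K-flat resolution, the unit and counit are checked to be quasi-isomorphisms via K-flatness of $\Pcal$ and two-out-of-three, and the coherent subcategories match under $\Hcal(\Bcal)\iso\Hcal(\Acal)$. One small correction to your last paragraph: the existence of $H$-equivariant K-flat resolutions for non-reductive $H$ is supplied by the ample-family hypothesis \eqref{Assump} (equivariant locally free covers exist for any linear algebraic group acting on a normal quasi-projective variety), not by the affine-model reduction of Section 2, whose role is only to arrange K-flatness of $\Acal$ itself over $\Ocal_X$.
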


\section{Equivariant linear Koszul duality} \label{SecEquiKoszulDual}
In the paper \cite{MR3} Mirkovi\'{c} and Riche extend the linear Koszul duality from \cite{MR1} and \cite{MR2} to the equivariant setting. In this section we recall their construction. Consider a complex algebraic variety $X$ with an action of a reductive algebraic group $G$. Again we assume that property \eqref{Assump} is satisfied. Consider a two term complex of locally free $G$-equivariant $\Ocal_X$-modules of finite rank.
\[ \Xcal := (\cdots 0 \to \Vcal \stackrel{f}{\longrightarrow} \Wcal \to 0 \cdots).\]
Here $\Vcal$ sits in homological degree -1 and $\Wcal$ is in homological degree 0. We consider it as a complex of graded $\Ocal_X$-modules with both $\Vcal$ and $\Wcal$ siting in internal degree 2. We define the graded symmetric algebra $\symcal_{\Ocal_X}(\Xcal)$ to be the sheaf tensor algebra of $\Xcal$ modulo the graded commutation relations $a \otimes b=(-1)^{\deg_h(a) \deg_h(b)} b \otimes a$, where $\deg_h$ is the homological degree. More explicitly $\symcal_{\Ocal_X}(\Xcal)$ is the bi-graded complex for which the term in homological degree $k$ and internal degree $2k+2n$ is
\[ \symcal_{\Ocal_X}(\Xcal)^k_{2k+2n}=\Lambda^k \Vcal \otimes_{\Ocal_X} \Sym^n(\Wcal). \]
The differential is given by
\[ d(v_1 \wedge \cdots \wedge v_n \otimes w)=\sum_{i=1}^n (-1)^i v_1 \wedge \cdots \wedge \hat{v_i} \wedge \cdots \wedge v_n \otimes f(v_i) w. \]
For a bi-graded sheaf of $\Ocal_X$-modules $\Mcal$ we denote by $\Mcal^\vee$ the bi-graded $\Ocal_X$-module with $(\Mcal^\vee)_j^i = \Hcal om_{\Ocal_X}(\Mcal_{-j}^{-i}, \Ocal_X)$. The dual complex is defined as
\[ \Ycal:=(\cdots 0 \to \Wcal^\vee \stackrel{-f^\vee}{\longrightarrow} \Vcal^\vee \to 0 \cdots ), \]
where $\Wcal^\vee$ sits in bi-degree (-1,-2) and $\Vcal^\vee$ sits in bi-degree (0,-2). A shift in homological degree is denoted by $[\;]$ and shift in internal degree is denoted by $(\;)$. We introduce the following notation
\[ \Tcal :=\symcal_{\Ocal_X}(\Xcal), \qquad \Rcal :=\symcal_{\Ocal_X}(\Ycal), \qquad \Scal :=\symcal_{\Ocal_X}(\Ycal[-2]). \]
Mirkovi\'{c} and Riche proved the following theorem known as equivariant linear Koszul duality.

\begin{thm} \cite[Theorem 3.1]{MR3} \label{EquiKoszulDual}
There is an equivalence of triangulated categories
\[ \kappa : \Dcal \Coh^{G \times \C^*}(\Tcal) \isomap \Dcal \Coh^{G \times \C^*}(\Rcal)^{op}, \]
satisfying $\kappa(\Mcal[n]( m ))=\kappa(\Mcal)[-n+m]( -m)$.
\end{thm}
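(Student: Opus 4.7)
The plan is to follow the classical Koszul duality template: construct a $(\Tcal, \Rcal)$-bimodule $\mathcal{K}$ that serves as a resolution of $\Ocal_X$ over each factor, define $\kappa$ as derived Hom into $\mathcal{K}$, verify the effect on a triangulated generating set, and finally promote the $\C^*$-equivariant equivalence of \cite{MR1, MR2} to the full $G \times \C^*$-equivariant statement by means of Lemma \ref{EquiHom}.

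Concretely, I would take $\mathcal{K} := \Tcal \otimes_{\Ocal_X} \Rcal$ as a bi-graded sheaf of $\Ocal_X$-modules carrying its two commuting actions, equipped with a Koszul-style differential built from the canonical pairings $\Vcal \otimes \Vcal^\vee \to \Ocal_X$ and $\Wcal \otimes \Wcal^\vee \to \Ocal_X$ together with the maps $f$ and $f^\vee$. A local calculation should show that $\mathcal{K}$ is quasi-isomorphic to $\Ocal_X$ placed in bi-degree $(0,0)$, and all ingredients are $G$-equivariant because $\Vcal, \Wcal$ and the pairings are, while the $\C^*$-action simply tracks the internal grading. The functor is then
\[ \kappa(\Mcal) := R\Hcal om_\Tcal(\Mcal, \mathcal{K}), \]
which naturally lands in right $\Rcal$-dg-modules and hence in $\Dcal \Coh^{G \times \C^*}(\Rcal)^{op}$; since $\Vcal$ and $\Wcal$ are locally free of finite rank, coherence is preserved. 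The formula $\kappa(\Mcal[n](m)) = \kappa(\Mcal)[-n+m](-m)$ should fall out by bookkeeping: the Hom flips $[n]$ to $[-n]$, and because the generators of $\Ycal$ sit in bi-degrees $(-1,-2)$ and $(0,-2)$, the two gradings on $\Rcal$ are offset relative to those on $\Tcal$, so a pure internal shift $(m)$ on $\Mcal$ becomes the combined twist $[m](-m)$ on the $\Rcal$-side.

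To prove that $\kappa$ is an equivalence I would introduce the symmetric partner $\kappa'(\Ncal) := R\Hcal om_\Rcal(\Ncal, \mathcal{K})$ and verify, by a formal adjunction argument combined with the quasi-isomorphism $\mathcal{K} \iso \Ocal_X$, that the two compositions are isomorphic to the identity on a triangulated generating set. The natural such set consists of the modules $\Ocal_X(m)[n]$ pushed up along the augmentation $\Tcal \to \Ocal_X$; by hypothesis \eqref{Assump} together with d\'evissage along the internal $\C^*$-grading, every object of $\Dcal\Coh^{G \times \C^*}(\Tcal)$ is assembled from these by iterated cones. On the generators $\kappa$ can be computed directly and sends them to the analogous generators over $\Rcal$, giving fully-faithfulness and essential surjectivity simultaneously.

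Finally, the equivariance step is where the real work concentrates. The non-equivariant, or purely $\C^*$-equivariant, version of the equivalence is exactly the content of \cite{MR1, MR2}, so I would invoke Lemma \ref{EquiHom} to identify $G \times \C^*$-equivariant Hom-spaces with the $G$-invariants of the $\C^*$-equivariant ones; since $\kappa$ is $G$-natural by construction, fully-faithfulness descends automatically, and essential surjectivity follows because the generators above are $G$-equivariant. The main obstacle, I expect, is the joint bookkeeping of the two gradings: a miscounted shift or sign in the construction of $\mathcal{K}$ would immediately destroy the formula for $\kappa(\Mcal[n](m))$. A closely related subtlety is the need to replace $\Mcal$ by a $K$-flat resolution so that the derived Hom into $\mathcal{K}$ can be computed honestly, and this is precisely what the $K$-flatness hypothesis imposed just before Lemma \ref{EquiHom} arranges.
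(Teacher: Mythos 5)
The paper does not reprove this statement: it is quoted from Mirkovi\'c--Riche, and the text only recalls their construction of $\kappa$ as the composition $\xi \circ \bar{\mathscr{A}} \circ D_\Omega$ of a Grothendieck--Serre type duality $D_\Omega$ (dualizing into an equivariant dualizing object $\Omega$), the twisted tensor functor $\mathscr{A}(\Mcal)=\Scal\otimes_{\Ocal_X}\Mcal$ with Koszul correction $d_2$, and the regrading $\xi$. Your proposal attempts an independent proof via a Koszul bimodule $\mathcal{K}=\Tcal\otimes_{\Ocal_X}\Rcal$ and $\kappa=R\Hcal om_\Tcal(-,\mathcal{K})$, and it has two genuine gaps. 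First, your functor amounts to duality over $\Ocal_X$ rather than over a dualizing complex. In Section 3 the variety $X$ is only assumed to satisfy \eqref{Assump}, not to be smooth, and for singular $X$ the functor $R\Hcal om_{\Ocal_X}(-,\Ocal_X)$ is not an anti-autoequivalence of $D^b\Coh^G(X)$ (it neither preserves boundedness nor squares to the identity). This is exactly why the actual construction inserts $D_\Omega$, built from the equivariant dualizing object $\Omega$ of \cite[Example 2.16]{Bez1} and its injective resolution $\mathcal{I}_\Omega$; without it your $\kappa$ fails to land in $\Dcal\Coh^{G\times\C^*}(\Rcal)$ and cannot be an equivalence in the stated generality.

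Second, the generation argument does not work. Any finite iterated cone of objects $\Ocal_X(m)[n]$ has cohomology coherent over $\Ocal_X$, i.e.\ lies in what Section 4 calls $\Dcal_X\Coh^{G\times\C^*}(\Tcal)$; but $\Dcal\Coh^{G\times\C^*}(\Tcal)$ only requires cohomology coherent over $H(\Tcal)$, and already the free module $\Tcal$ itself (when $\Wcal\neq 0$) is not in the thick subcategory generated by the $\Ocal_X(m)[n]$. So verifying the two compositions on these objects establishes neither fully-faithfulness nor essential surjectivity on the whole category. The Mirkovi\'c--Riche proof avoids this by working with the subcategories $\Dcal^{bc}_{\pm}$ in which the internal degree is bounded uniformly in the homological degree, and by exhibiting an explicit quasi-inverse to $\bar{\mathscr{A}}$ rather than arguing via generators; the restriction from $\Dcal^{bc}_+\QCoh$ to $\Dcal\Coh$ is a separate step. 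Your use of Lemma \ref{EquiHom} to pass from $\C^*$- to $G\times\C^*$-equivariant Hom spaces is in the right spirit and is indeed how the equivariant lift of the Hom-level statements goes, but by itself it gives neither essential surjectivity nor a repair of the two issues above. The grading bookkeeping $\kappa(\Mcal[n](m))=\kappa(\Mcal)[-n+m](-m)$ you describe is consistent with the placement of $\Ycal$ in bidegrees $(-1,-2)$ and $(0,-2)$ and with the regrading $\xi$, so that part is fine.
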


\begin{rem}
In \cite{MR3} the theorem is stated in less generality. However, the corresponding statement in the non-equivariant setting \cite[Thm 1.7.1 and section 1.8]{MR2} is stated in this generality and the proof in \cite{MR3} shows that this non-equivariant equivalence can be lifted to the equivariant setting.
\end{rem}

We now recall their construction of the functor $\kappa$. For a dg-algebra $\Acal$ let $\Ccal_- \QCoh^{G \times \C^*}(\Acal)$ (resp. $\Ccal_+ \QCoh^{G \times \C^*}(\Acal)$) denote the full subcategory of $\Ccal \QCoh^{G \times \C^*}\linebreak[1](\Acal)$ consisting of objects whose internal degree is bounded above (resp. below) uniformly in the homological degree. The associated derived category is denoted by $\Dcal_- \QCoh^{G \times \C^*}(\Acal)$ (resp. $\Dcal_+ \QCoh^{G \times \C^*}(\Acal)$). The functor $\kappa$ is a restriction of a functor
\[ \kappa : \Dcal^{bc}_+\QCoh^{G\times \C^*}(\Tcal) \isomap \Dcal^{bc}_-\QCoh^{G \times \C^*}(\Rcal)^{op}. \]
This functor is the composition of three functors. The first is the functor
\[ \mathscr{A} : \Ccal \QCoh^{G \times \C^*}(\Tcal) \to \Ccal \QCoh^{G \times \C^*}(\Scal). \]
As a bi-graded equivariant $\Ocal_X$-module $\mathscr{A}(\Mcal)=\Scal \otimes_{\Ocal_X} \Mcal$. The $\Scal$-action is induced by the left multiplication of $\Scal$ on itself. The differential is the sum of two terms $d_1$ and $d_2$. The term $d_1$ is the natural differential on the tensor product
\[ d_1(s \otimes m)=d_{\Scal}(s) \otimes m +(-1)^{|s|}s \otimes d_{\Mcal}(m). \]
The term $d_2$ is the composition of the following morphisms. First the morphism
\[ \rho : \Scal \otimes_{\Ocal_X} \Mcal \to \Scal \otimes_{\Ocal_X} \Mcal, \qquad s \otimes m \mapsto (-1)^{|s|} s \otimes m. \]
The second morphism 
\[ \psi : \Scal \otimes_{\Ocal_X} \Mcal \to \Scal \otimes \Xcal^\vee \otimes \Xcal \otimes \Mcal \]
is induced by the natural morphism $i :\Ocal_X \to \Ecal nd(\Xcal) \iso \Xcal^\vee \otimes \Xcal$. The last map is the morphism
\[ \Psi :\Scal \otimes \Xcal^\vee \otimes \Xcal \otimes \Mcal \to \Scal \otimes \Mcal \]
induced by the right multiplication $\Scal \otimes_{\Ocal_X} \Xcal^\vee \to \Scal$ and the action $\Xcal \otimes_{\Ocal_X} \Mcal \to \Mcal$. The term $d_2$ is defined as $d_2 = \Psi \circ \psi \circ \rho$. Locally, choosing a basis $\{x_\alpha\}$ of $\Xcal$ and the dual basis $\{x_\alpha^*\}$ of $\Xcal^\vee$ it can be written as
\[ d_2(s \otimes m)=(-1)^{|s|}\sum_{\alpha} s x_{\alpha}^* \otimes x_\alpha \cdot m. \]
This data defines a $\Scal$-dg-module structure on $\mathscr{A}(\Mcal)$. Part of the proof of \cite[Thm. 3.1]{MR3} is showing that $\mathscr{A}$ induces an equivalence of categories
\[ \bar{\mathscr{A}}: \Dcal^{bc}_-\QCoh^{G \times \C^*}(\Tcal) \isomap \Dcal^{bc}_-\QCoh^{G \times \C^*}(\Scal). \]

By \cite[Example 2.16]{Bez1} under the assumption \ref{Assump} there exists an object $\Omega \in \Dcal^b \Coh^G(X)$ whose image under the forgetful functor For$:\Dcal^b \Coh^G(X) \to \Dcal^b \Coh(X)$ is a dualizing object in $\Dcal^b \Coh(X)$. Let $\mathcal{I}_\Omega$ be a bounded below complex of injective objects of $\QCoh^G(X)$ whose image in the derived category $\Dcal^+ \QCoh(X)$ is $\Omega$. It defines a functor on the category of complexes of all equivariant sheaves on $X$.
\[ \Rcal \Hcal om_{\Ocal_X}(-, \mathcal{I}_\Omega) : \Ccal ( \text{Sh}^G(X) )\to \Ccal( \text{Sh}^G(X))^{op}.\]
In \cite[Lemma 2.3]{MR3} it is proved that this functor is exact and that the induced functor on derived categories restricts to a functor
\[D_\Omega^X : D^b\Coh^G(X) \to D^b\Coh^G(X)^{op}. \]
Let $\tilde{\Ccal}(\Tcal-\module^{G \times \C^*})$ denote the category of all sheaves of $G \times \C^*$-equivariant $\Tcal$ dg-modules on $X$. It's derived category is denoted by $\tilde{\Dcal}(\Tcal-\module^{G \times \C^*})$. Consider the functor
\[ \tilde{D}_\Omega : \tilde{\Ccal}(\Tcal-\module^{G \times \C^*}) \to \tilde{\Ccal}(\Tcal-\module^{G \times \C^*})^{op}, \]
which sends $\Mcal \in \tilde{\Ccal}(\Tcal-\module^{G \times \C^*})$ to the dg-module whose underlying $G \times \mathbb{G}_m$-equivariant $\Ocal_X$-dg-module is $\Hcal om(\Mcal, \mathcal{I}_\Omega)$, with $\Tcal$-action defined by
\[ (t \cdot \phi)(m)=(-1)^{|t| \cdot |\phi |}\phi(t \cdot m), \qquad t \in \Tcal, m \in \Mcal. \]
In proposition 2.6 Mirkovi\'{c} and Riche prove that the induced functor restricts to an equivalence
\[ D_\Omega : \Dcal^{bc}\QCoh^{G \times \C^*}(\Tcal) \isomap \Dcal^{bc}\QCoh^{G \times \C^*}(\Tcal)^{op}.\]

The last functor is the regrading functor
\[ \xi : \Ccal \QCoh^{G \times \C^*}(\Scal) \to \Ccal \QCoh^{G \times \C^*}(\Rcal) \]
sending $\Mcal \in \Ccal \QCoh^{G \times \C^*}(\Scal)$ to the $\Rcal$-dg-module with $(i,j)$ component $\xi(\Mcal)=\Mcal_j^{i-j}$. If one forgets the grading then $\Scal$ and $\Rcal$ coincides and so does $\Mcal$ and $\xi(\Mcal)$. The $\Rcal$-action on the differential on $\xi(\Mcal)$ is the same as the $\Scal$-action on the differential of $\Mcal$. This is an equivalence of categories. The functor $\kappa$ from theorem \ref{EquiKoszulDual} is the restriction of the composition $\xi \circ \bar{\mathscr{A}} \circ D_\Omega$.

\subsection{Extension to an arbitrary linear algebraic group}
We want to be able to work with equivariance with respect to a Borel subgroup. Thus, we need to extend linear Koszul duality to work with a not necessarily reductive linear algebraic group $H$ sitting inside a reductive group $G$. Let $X$ be a $H$-variety. Consider the variety $\tilde{X} := \Ind_H^G(X) = \tfrac{G \times X}{H}$. The projection and quotient morphisms
\[ X \stackrel{\pr}{\longleftarrow} G \times X \stackrel{\pi}{\longrightarrow} \tilde{X}.  \]
induce equivalences of categories
\begin{align} 
&A:= \pr^* : \QCoh^H(X) \isomap \QCoh^{G \times H}(G \times X),\\
& B:=\pi^* : \QCoh^G(\tilde{X}) \isomap \QCoh^{G \times H}(G \times X).
\end{align}

\begin{lemma}
The functors $A$ and $B$ are monoidal.
\end{lemma}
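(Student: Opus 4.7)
The monoidal structure on each of $\QCoh^H(X)$, $\QCoh^G(\tilde X)$, and $\QCoh^{G \times H}(G \times X)$ is the tensor product over the structure sheaf, equipped with the diagonal equivariant structure, and with unit object the structure sheaf itself. The plan is to reduce to the standard fact that for any morphism $f : Y \to Z$ of schemes the ordinary pullback $f^* : \QCoh(Z) \to \QCoh(Y)$ is symmetric monoidal via a canonical natural isomorphism
\[ c^f_{\Fcal, \Gcal} : f^*(\Fcal \otimes_{\Ocal_Z} \Gcal) \isomap f^*\Fcal \otimes_{\Ocal_Y} f^*\Gcal, \]
satisfying the usual coherence axioms, together with the identification $f^*\Ocal_Z \cong \Ocal_Y$.

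First I would note that $\pr$ is both $G$-equivariant (with $G$ acting trivially on $X$ and by left translation on the first factor of $G \times X$) and $H$-equivariant (with $H$ acting on $G \times X$ by $h \cdot (g,x) = (gh^{-1}, hx)$), while $\pi$ is $G \times H$-equivariant with $H$ acting trivially on $\tilde X$. For such an equivariant morphism $f : Y \to Z$ with acting group $K$, the $K$-equivariant structure on $f^*\Fcal$ is obtained by pulling the structure isomorphism $\alpha_\Fcal : a_Z^* \Fcal \iso \pr_{2,Z}^* \Fcal$ back along $\id_K \times f : K \times Y \to K \times Z$, using the commutativity of the action and projection squares to identify $a_Y^* f^*$ with $(\id_K \times f)^* a_Z^*$ and similarly for $\pr_2$.

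The one remaining piece of content is to verify that $c^{\pr}_{\Fcal, \Gcal}$ and $c^{\pi}_{\Fcal, \Gcal}$ are themselves equivariant, i.e.\ that the two natural ways of endowing $f^*(\Fcal \otimes_{\Ocal_Z} \Gcal) \cong f^*\Fcal \otimes_{\Ocal_Y} f^*\Gcal$ with an equivariant structure---pulling back the diagonal structure on the tensor product, versus taking the diagonal of the pulled-back structures---coincide. This is a formal diagram chase built from naturality of $c$, the compatibility $c^{g \circ f} \cong c^f \circ f^*(c^g)$ for composition of pullbacks, and the symmetric monoidal pentagon and hexagon coherences; compatibility of units is immediate from $f^*\Ocal_Z \cong \Ocal_Y$. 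I expect this coherence verification to be the only point requiring care, but it contains no genuine obstacle beyond notational bookkeeping, and for $A$ the compatibility must be checked separately with respect to both the $G$-action and the $H$-action (and symmetrically for $B$), since the claim is monoidality as a functor to the $G \times H$-equivariant category.
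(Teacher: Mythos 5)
Your proposal is correct and in substance the same as the paper's argument: both reduce the claim to the standard monoidality of pullback of quasi-coherent sheaves together with its compatibility with the equivariant structures on each side. The paper packages the equivariant tensor product as $\Delta^*\Res^{H\times H}_{H_\Delta}(M\boxtimes N)$, so that the ``coherence verification'' you defer becomes a short explicit chain of base-change isomorphisms ($\Res$ commuting with $\pr_2^*$, and $\Delta_G^*\pr_2^*\iso\pr^*\Delta^*$ from the commuting square of diagonals), but the content is identical.
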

\begin{proof}
Let $\Delta : X \to X \times X$ be the diagonal embedding. By definition the monoidal action on $\QCoh^H(X)$ is given by
\[ M \otimes_{\Ocal_X} N := \Delta^* \Res_{H_\Delta}^{H\times H}(M \boxtimes N). \]
Consider the commutative diagram
\[ \xymatrix{X \ar[r]^-\Delta & X \times X \\ G \times X \ar[u]^\pr \ar[r]^-{\Delta_G} & G \times X \times G \times X \ar[u]^{\pr_2}} \]
Using this we calculate
\begin{align}
A(M) \otimes_{\Ocal_{G \times X}} A(N) & =\Delta_G^* \Res^{G \times H \times G \times H}_{(G \times H)_\Delta}(\pr^* M \boxtimes \pr^*N)\\ 
& \iso \Delta_G^* \Res^{G \times H \times G \times H}_{(G \times H)_\Delta}\pr^*_2(M \boxtimes N)\\
& \iso \Delta_G^* \pr_2^* \Res_{H_\Delta}^{H \times H}(M \boxtimes N)\\
& \iso \pr^* \Delta^* \Res_{H_\Delta}^{H \times H}(M \boxtimes N)\\
&=A(M \otimes_{\Ocal_X} N).
\end{align}
Thus, $A$ is monoidal. For $B$ we have the following diagram
\[ \xymatrix{\tilde{X} \ar[r]^-{\tilde{\Delta}} & \tilde{X} \times \tilde{X} \\ G \times X \ar[u]^\pi \ar[r]^-{\Delta_G} & G \times X \times G \times X \ar[u]^{\pi_2}} \]
This gives
\begin{align}
B(M) \otimes_{\Ocal_{G \times X}} B(N) &= \Delta_G^* \Res_{(G \times H)_\Delta}^{G \times H \times G \times H}(B(M) \boxtimes B(N))\\
&\iso \Delta_G^* \pi_2^* \Res_{G_\Delta}^{G \times G}(M \boxtimes N)\\
&\iso \pi^* \tilde{\Delta}^* \Res_{G_\Delta}^{G \times G}(M \boxtimes N)\\
&=B(M \otimes_{\Ocal_{\tilde{X}}} N).
\end{align}
Hence, both functors are monoidal.
\end{proof}

By the lemma we have a monoidal equivalence of categories
\[ B^{-1} A : \QCoh^H(X) \isomap \QCoh^G(\tilde{X}). \]
Consider a complex of $H$-equivariant vector bundles
\[ \Xcal := (\cdots \to 0 \to \Vcal \to \Wcal \to 0 \to \cdots). \]
Applying $B^{-1}A$ we get a new complex
\[ \tilde{\Xcal} :=B^{-1}A(\Xcal) =(\cdots \to 0 \to B^{-1}A(\Vcal) \to B^{-1}A(\Wcal) \to 0 \to \cdots). \]
Notice that
\begin{align} 
\Hcal om_{\Ocal_{\tilde{X}}}(B^{-1}A(\Mcal), \Ocal_{\tilde{X}}) &\iso \Hcal om_{\Ocal_{\tilde{X}}}(B^{-1}A(\Mcal), B^{-1}A(\Ocal_{X})) \\
&\iso B^{-1}A ( \Hcal om_{\Ocal_X}(\Mcal, \Ocal_X)).
\end{align}
Thus, $\widetilde{\Xcal^\vee} \iso (\tilde{\Xcal})^\vee$.

In the construction of $\symcal_{\Ocal_X}(\Xcal)$ we only used the monoidal structure on $\QCoh^H(X)$. Since $B^{-1}A$ is monoidal we get
\[ B^{-1}A(\symcal_{\Ocal_X}(\Xcal)) \iso \symcal_{\Ocal_{\tilde{X}}}(\tilde{\Xcal}). \]
Let $\Mcal$ be a dg-module over $\symcal_{\Ocal_X}(\Xcal)$. I.e. there is a collection of linear maps $\symcal_{\Ocal_X}^n(\Xcal) \otimes_{\Ocal_X} \Mcal \to \Mcal$ respecting the differentials. $B^{-1}A$ respects these maps so $B^{-1}A(\Mcal)$ is a dg-module over $\symcal_{\Ocal_{\tilde{X}}}(\tilde{\Xcal})$. Thus, we have proved

\begin{prop}
There is a natural equivalence of dg-categories
\[\Ccal \QCoh^H(\symcal_{\Ocal_X}(\Xcal)) \iso \Ccal\QCoh^G(\symcal_{\Ocal_{\tilde{X}}}(\tilde{\Xcal})).\]
\end{prop}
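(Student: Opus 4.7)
The plan is to upgrade the monoidal equivalence $B^{-1}A : \QCoh^H(X) \isomap \QCoh^G(\tilde X)$ to an equivalence at the level of dg-modules over the symmetric dg-algebras. The paragraph preceding the statement has already done the main preparatory work: $B^{-1}A$ is monoidal, commutes with $\Hcal om_{\Ocal}(-,\Ocal)$, and carries $\symcal_{\Ocal_X}(\Xcal)$ to $\symcal_{\Ocal_{\tilde X}}(\tilde\Xcal)$ as sheaves of dg-algebras; the remaining job is bookkeeping.

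First, on objects. A dg-module $\Mcal$ in $\Ccal\QCoh^H(\symcal_{\Ocal_X}(\Xcal))$ is a $\Z$-graded object of $\QCoh^H(X)$ equipped with a differential $d_\Mcal$ and an action morphism $\alpha_\Mcal : \symcal_{\Ocal_X}(\Xcal) \otimes_{\Ocal_X} \Mcal \to \Mcal$ satisfying associativity, unitality, and the graded Leibniz rule. Transporting $\alpha_\Mcal$ through the natural monoidal isomorphism
\[ B^{-1}A(\symcal_{\Ocal_X}(\Xcal) \otimes_{\Ocal_X} \Mcal) \iso \symcal_{\Ocal_{\tilde X}}(\tilde\Xcal) \otimes_{\Ocal_{\tilde X}} B^{-1}A(\Mcal) \]
yields an action of $\symcal_{\Ocal_{\tilde X}}(\tilde\Xcal)$ on $B^{-1}A(\Mcal)$. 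All axioms persist because they are diagrammatic equalities preserved by any monoidal functor compatible with gradings. The inverse functor $A^{-1}B$ provides a quasi-inverse by the same argument.

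Second, on the dg-enrichment. A degree-$n$ morphism between two such dg-modules is a map of underlying graded $\QCoh^H(X)$-objects intertwining the $\symcal_{\Ocal_X}(\Xcal)$-actions with the appropriate sign; applying $B^{-1}A$ gives a bijection on such maps since $B^{-1}A$ is an equivalence of abelian categories compatible with the tensor product. The differential on the Hom complex is defined in terms of $d_\Mcal$, $d_\Ncal$, and pre/post-composition, all of which $B^{-1}A$ carries through. Hence the Hom complexes in the two dg-categories are naturally isomorphic as complexes of $\C$-vector spaces, giving the desired equivalence of dg-categories.

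The main obstacle, such as it is, is to check that the monoidal natural isomorphism actually identifies the \emph{dg-algebra} structures and not merely the underlying graded algebras, i.e.\ that the Koszul-type differential on $\symcal_{\Ocal_X}(\Xcal)$, which is built from the map $f : \Vcal \to \Wcal$, corresponds under $B^{-1}A$ to the analogous differential built from $B^{-1}A(f)$. This is immediate because $B^{-1}A$ takes $f$ to $B^{-1}A(f)$ by functoriality, and the differential is constructed from $f$ by the same universal recipe on either side.
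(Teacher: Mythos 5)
Your proposal is correct and follows essentially the same route as the paper: the preceding paragraph establishes that $B^{-1}A$ is a monoidal equivalence carrying $\Xcal$ to $\tilde\Xcal$ and hence $\symcal_{\Ocal_X}(\Xcal)$ to $\symcal_{\Ocal_{\tilde X}}(\tilde\Xcal)$, and the proposition is then obtained by transporting module structures (the action maps and differentials) through this equivalence. Your treatment of the dg-enrichment and of the compatibility of the Koszul differentials is slightly more explicit than the paper's, which simply notes that $B^{-1}A$ respects the action maps, but the argument is the same.
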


The functor sends quasi-isomorphisms to quasi-isomorphisms so it descends to the derived category. Using this equivalence we obtain the desired version of linear Koszul duality.

\begin{thm}
Let $G$ be a complex reductive group acting on a variety $X$ satisfying condition \ref{Assump}. Let $H$ be a closed subgroup of $G$ and define $\Tcal$ and $\Rcal$ as in the previous section. Then there is an equivalence of triangulated categories
\[ \kappa : \Dcal \Coh^{H \times \C^*}(\Tcal) \isomap \Dcal \Coh^{H \times \C^*}(\Rcal)^{op}, \]
satisfying $\kappa(\Mcal[n]( m ))=\kappa(\Mcal)[-n+m]( -m)$.
\end{thm}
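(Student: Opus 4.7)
The plan is to bootstrap from Theorem \ref{EquiKoszulDual} via the induction equivalence $B^{-1}A$ established in the preceding proposition. Set $\tilde{X} = \Ind_H^G(X)$, let $\tilde{\Xcal} = B^{-1}A(\Xcal)$ be the induced two-term complex of $G$-equivariant vector bundles on $\tilde{X}$, and write $\tilde{\Tcal} = \symcal_{\Ocal_{\tilde{X}}}(\tilde{\Xcal})$ and $\tilde{\Rcal} = \symcal_{\Ocal_{\tilde{X}}}(\tilde{\Ycal})$, where $\tilde{\Ycal}$ is the dual complex shifted as in Section \ref{SecEquiKoszulDual}. Since $B^{-1}A$ is exact and monoidal and is compatible with the purely internal $\C^*$-grading on the symmetric algebras, the proposition upgrades at once to equivalences of dg-categories
\[ \Ccal \QCoh^{H \times \C^*}(\Tcal) \iso \Ccal \QCoh^{G \times \C^*}(\tilde{\Tcal}), \qquad \Ccal \QCoh^{H \times \C^*}(\Rcal) \iso \Ccal \QCoh^{G \times \C^*}(\tilde{\Rcal}). \]

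First I would check that these descend to the derived categories (automatic from exactness) and restrict to the bounded coherent subcategories. The two maps $\pr : G \times X \to X$ and $\pi : G \times X \to \tilde{X}$ are both faithfully flat and $\pi$ is smooth (a principal $H$-bundle), so coherence of cohomology can be tested after pulling back to $G \times X$ and therefore transfers between the two sides. I would also verify that $\tilde{X}$ inherits condition \eqref{Assump} from $X$: a $G$-equivariant coherent sheaf on $\tilde{X}$ corresponds under $B^{-1}A$ to an $H$-equivariant coherent sheaf on $X$, and an $H$-equivariant flat surjection onto the latter transports back to a $G$-equivariant flat surjection onto the former.

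Once this reduction is in place, Theorem \ref{EquiKoszulDual} applied to the reductive pair $(\tilde{X}, G)$ provides
\[ \kappa_{\tilde{X}} : \Dcal \Coh^{G \times \C^*}(\tilde{\Tcal}) \isomap \Dcal \Coh^{G \times \C^*}(\tilde{\Rcal})^{op} \]
with the required shift identity $\kappa(\Mcal[n](m)) = \kappa(\Mcal)[-n+m](-m)$. Composing with the two induction equivalences yields the desired $\kappa$, and the shift compatibility passes through directly because $B^{-1}A$ preserves both the homological and the internal grading.

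The step I expect to require most care is confirming that the induction equivalence genuinely intertwines the three constituent functors $\bar{\mathscr{A}}$, $D_\Omega$, and $\xi$ out of which Mirkovi\'{c}--Riche build $\kappa$. Each is constructed from data intrinsic to the monoidal dg-structure --- tensor product with $\Scal$, internal $\Hcal om$ into a dualizing object, and a regrading that ignores the equivariant structure --- all of which commute with the monoidal equivalence $B^{-1}A$. The only non-formal ingredient is producing a dualizing object $\tilde{\Omega} \in \Dcal^b \Coh^G(\tilde{X})$, which I would obtain by descending the $B^{-1}A$-image of $\Omega \in \Dcal^b \Coh^H(X)$ along the smooth principal $H$-bundle $G \times X \to \tilde{X}$; compatibility of $D_{\tilde{\Omega}}$ with $D_\Omega$ under the induction equivalence then follows because internal Hom commutes with the (flat, hence exact) pullbacks $\pr^*$ and $\pi^*$.
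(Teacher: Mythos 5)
Your proposal is correct and follows essentially the same route as the paper: transport everything through the monoidal induction equivalence $B^{-1}A$ to the $G$-equivariant setting on $\tilde{X}=\Ind_H^G(X)$, apply Theorem \ref{EquiKoszulDual} there, and conjugate back. The paper leaves this step almost entirely implicit, so your additional checks (coherence and condition \eqref{Assump} transferring to $\tilde{X}$, compatibility with the internal grading) are exactly the details one would want to see; note only that the final intertwining of $\bar{\mathscr{A}}$, $D_\Omega$, $\xi$ with $B^{-1}A$ is not actually needed, since defining $\kappa$ as the conjugate of $\kappa_{\tilde{X}}$ already yields an equivalence with the stated shift property.
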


\section{Derived category of equivariant DG-modules for $G$-schemes} \label{Aside}

In this section we extend the construction in \cite{Isik} to the equivariant setting. Let $X$ be a smooth complex algebraic variety with an action of a reductive algebraic group $G$. In particular, $X$ is Noetherian, separated and regular. Then $X$ has an ample family of $G$-equivariant line bundles and property \eqref{Assump} is satisfied (see \cite[Remark 1.5.4]{VV}). Let $\pi : E \to X$ be a $G$-equivariant vector bundle of rank $n$. Denote the sheaf of $G$-equivariant sections of the bundle by $\Ecal$ and let $s \in H^0(X, \Ecal)$ be a $G$-equivariant regular section. The zero scheme of $s$ is denoted by $Y$. In order to use linear Koszul duality we need to introduce an additional $\Z$-grading or equivalently a $\C^*$-action. Consider $\Ocal_Y[t,t^{-1}]$ as a bi-graded dg-algebra sitting in homological degree 0 with zero differential and $t$ a formal variable sitting in internal degree -2.

\begin{prop} \label{DAcal}
There is an equivalence of categories
\[ \Dcal \Coh^{G \times \C^*}(\Ocal_Y[t,t^{-1}]) \iso D^b \Coh^G(Y)\]
\end{prop}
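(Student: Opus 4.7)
The plan is to construct explicit inverse functors, exploiting the invertibility of $t$ to collapse the internal $\C^*$-grading. Since $\Ocal_Y[t,t^{-1}]$ has zero differential and is concentrated in homological degree $0$, a $G\times\C^*$-equivariant dg-module is just a bigraded sheaf $\Mcal = \bigoplus_{i,j}\Mcal_j^i$ (the second index being the $\C^*$-weight) together with a $G$-equivariant differential of bidegree $(1,0)$ compatible with the $\Ocal_Y[t,t^{-1}]$-action, in which multiplication by $t$ is an $\Ocal_Y$-linear isomorphism $\Mcal_j^i \stackrel{\sim}{\to} \Mcal_{j-2}^i$.

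First I would define $F : \Dcal\Coh^{G\times\C^*}(\Ocal_Y[t,t^{-1}]) \to D^b\Coh^G(Y)$ by taking the internal degree $0$ slice, $\Mcal \mapsto \Mcal_0^\bullet$, which is a complex of $G$-equivariant $\Ocal_Y$-modules by compatibility of the differential with the bigrading. In the other direction I would define
\[ \Phi : D^b\Coh^G(Y) \to \Dcal\Coh^{G\times\C^*}(\Ocal_Y[t,t^{-1}]), \qquad \Ncal^\bullet \mapsto \Ncal^\bullet \otimes_{\Ocal_Y} \Ocal_Y[t,t^{-1}], \]
with $\Ncal^\bullet$ placed in internal degree $0$; since $\Ocal_Y[t,t^{-1}]$ is free over $\Ocal_Y$ the tensor product is automatically derived and preserves cohomology.

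Next I would verify the target conditions. For $\Phi$, boundedness is clear and coherence follows from $\Hcal^i(\Phi(\Ncal)) \cong \Hcal^i(\Ncal)\otimes_{\Ocal_Y}\Ocal_Y[t,t^{-1}]$. For $F$, a finite set of $\Ocal_Y[t,t^{-1}]$-generators of $\Hcal^i(\Mcal)$ can be translated into internal degree $0$ by multiplying with appropriate powers of $t$ (since $t$ is invertible and preserves $\ker$ and $\mathrm{im}$ of $d_\Mcal$), showing that $\Hcal^i(F(\Mcal)) = \Hcal^i(\Mcal)_0$ is coherent over $\Ocal_Y$; the same argument gives boundedness. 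Then $F\circ \Phi \cong \id$ is immediate, as the internal degree $0$ part of $\Ncal^\bullet \otimes \Ocal_Y[t,t^{-1}]$ is $\Ncal^\bullet$. For $\Phi\circ F \cong \id$, the natural evaluation map
\[ \Mcal_0^\bullet \otimes_{\Ocal_Y} \Ocal_Y[t,t^{-1}] \to \Mcal, \qquad m \otimes t^k \mapsto t^k \cdot m, \]
is $G\times\C^*$-equivariant, commutes with differentials and the $\Ocal_Y[t,t^{-1}]$-action, and on each internal degree $-2k$ strand is multiplication by the unit $t^k$, hence an isomorphism.

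The only genuinely nontrivial step is the coherence bookkeeping in the construction of $F$: one must know that the "$t$-translation" trick really does produce $\Ocal_Y$-coherent generators of the internal degree $0$ cohomology, which is what makes $F$ land in $D^b\Coh^G(Y)$ rather than merely in a larger quasi-coherent category. Everything else reduces to the formal fact that a graded module over a Laurent polynomial ring, in which the variable is invertible and has nonzero weight, is uniquely determined up to canonical isomorphism by a single weight slice. Under the standing convention that internal $\C^*$-weights lie in $2\Z$ (as in the setup of Section \ref{SecEquiKoszulDual}, where $\Vcal$ and $\Wcal$ sit in internal degree $2$), no separate "odd" component arises.
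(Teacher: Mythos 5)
Your proof is correct and realizes the same equivalence as the paper, but by a different, more self-contained route. The paper's own proof is a two-line geometric reduction: it identifies $\Ocal_Y[t,t^{-1}]$ with the structure sheaf of $Y \times \C^*$, invokes Remark \ref{OX} to pass from dg-modules over $\Ocal_{Y\times\C^*}$ to $D^b\Coh^{G\times\C^*}(Y\times\C^*)$, and then uses that pullback along the trivial $\C^*$-torsor $Y\times\C^*\to Y$ is an equivalence $\Coh^G(Y)\iso\Coh^{G\times\C^*}(Y\times\C^*)$. Your functors are exactly the algebraic shadows of this: $\Phi=-\otimes_{\Ocal_Y}\Ocal_Y[t,t^{-1}]$ is that pullback, and $F$ is restriction to the unit section, i.e.\ the weight-zero slice. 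What your version buys is an explicit unit and counit and a visible accounting of where coherence and boundedness of the cohomology are used (the $t$-translation of a finite homogeneous generating set into degree zero); what the paper's version buys is brevity, since the torsor statement is standard. One point deserves emphasis: both arguments work only because the internal grading is supported in $2\Z$, with internal degree $2k$ identified with $\C^*$-weight $k$. If modules of odd internal degree were admitted while $t$ sits in degree $-2$, the category would split into an even and an odd summand and the weight-zero slice would kill the odd part (geometrically, the squared dilation action on the $\C^*$-factor is no longer free). You flag this convention explicitly at the end, and it is indeed the convention in force in Section \ref{SecEquiKoszulDual}, so there is no gap.
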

\begin{proof}
The pull-back along the projection $Y \times \C^* \to Y$ is an equivalence of categories $\Coh^G(Y) \iso \Coh^{G \times \C^*}(Y \times \C^*)$. By remark \ref{OX}
\[ D^b\Coh^{G \times \C^*}(Y \times \C^*) \iso \Dcal \Coh^{G \times \C^*}( \Ocal_{Y \times \C^*}). \]
Notice that $\Ocal_{Y \times \C^*} \iso \Ocal_Y[t,t^{-1}]$.
\end{proof}

By lemma \ref{QuasiIso} we may replace $\Ocal_Y[t,t^{-1}]$ by a quasi-isomorphic dg-algebra siting in non-positive homological degrees which fits into the setting of linear Koszul duality. When $Y$ is the zero locus of a regular section $s \in H^0(X,\Ecal)$ the sheaf $\Ocal_Y$ has an equivariant Koszul resolution
\[ 0 \to \Lambda^n \Ecal^\vee  \to \cdots \to \Lambda^2 \Ecal^\vee \to \Ecal^\vee \to \Ocal_X \to \Ocal_Y \to 0 \]
with differential given by $d(f)=f(s)$ and extended by Leibnitz rule. Using shifted copies of this resolution in each internal degree we get a bi-complex with is a resolution of $\Ocal_Y[t,t^{-1}]$.
\[ \xymatrix{\cdots \ar[r] & \Lambda^3 \Ecal^\vee t^{-1} \ar[r] & \Lambda^2 \Ecal^\vee \ar[r] & \Ecal^\vee t \ar[r] & \Ocal_X t^2 & i=-4\\
\cdots \ar[r] & \Lambda^3 \Ecal^\vee t^{-2} \ar[r] & \Lambda^2 \Ecal^\vee t^{-1} \ar[r] & \Ecal^\vee \ar[r] & \Ocal_X t & i=-2\\
\cdots \ar[r] & \Lambda^3 \Ecal^\vee t^{-3} \ar[r] & \Lambda^2 \Ecal^\vee t^{-2} \ar[r] & \Ecal^\vee t^{-1} \ar[r] & \Ocal_X & i=0\\
\cdots \ar[r] & \Lambda^3 \Ecal^\vee t^{-4} \ar[r] & \Lambda^2 \Ecal^\vee t^{-3} \ar[r] & \Ecal^\vee t^{-2} \ar[r] & \Ocal_X t^{-1} & i=2\\
\cdots \ar[r] & \Lambda^3 \Ecal^\vee t^{-5} \ar[r] & \Lambda^2 \Ecal^\vee t^{-4} \ar[r] & \Ecal^\vee t^{-3} \ar[r] & \Ocal_X t^{-2} & i=4\\} \]
We denote this bi-complex by $\Acal_{X \times \C^*}$. By construction $H(\Acal_{X \times \C^*})=\Ocal_Y[t,t^{-1}]$ and the morphism
\[ \psi : \Acal_{X \times \C^*} \to \Ocal_Y[t,t^{-1}],\]
which takes $t^k f$ to $t^k f|_Y$ for $f \in \Ocal_X$ and everything else to zero, is a quasi-isomorphism. Thus, we have shown that

\begin{prop} \label{AcalCohY}
There is an equivalence of categories
\[ \Dcal \Coh^{G \times \C^*}(\Acal_{X \times \C^*}) \iso \Dcal \Coh^{G \times \C^*}(\Ocal_Y[t,t^{-1}]). \]
\end{prop}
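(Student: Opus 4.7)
The approach is to recognize the map
\[ \psi : \Acal_{X \times \C^*} \to \Ocal_Y[t, t^{-1}] \]
introduced just before the statement as a $(G \times \C^*)$-equivariant quasi-isomorphism of sheaves of dg-algebras on $X$, and then to conclude by direct application of Lemma \ref{QuasiIso} with $H = G \times \C^*$.

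First I would verify that the hypotheses of Lemma \ref{QuasiIso} are in place. Since $X$ is smooth, Remark \ref{AmpleProp} guarantees that property \eqref{Assump} holds for the $G$-action on $X$, and the additional trivial $\C^*$-action on $X$ creates no new issues. Both $\Acal_{X \times \C^*}$ and $\Ocal_Y[t,t^{-1}]$ are quasi-coherent, non-positively graded, graded-commutative dg-algebras on $X$ (the latter viewed via the closed embedding $Y \hookrightarrow X$), so they fit the setting of the previous section. Term by term $\Acal_{X \times \C^*}$ is a direct sum of copies of the locally free sheaves $\Lambda^k \Ecal^\vee$, hence K-flat as an $\Ocal_X$-module, as required.

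Next I would check that $\psi$ is compatible with the structures. The differential on $\Acal_{X \times \C^*}$ is the Koszul differential induced by contraction against the $G$-equivariant regular section $s$, combined with multiplication by $t$ so that the internal degree is preserved ($\Ecal^\vee$ sits in internal degree $2$ and $t$ in internal degree $-2$). The $G$-equivariance of the differential follows from $s$ being a $G$-equivariant section and $\Ecal$ being a $G$-equivariant vector bundle, while $\C^*$-equivariance is built into the internal grading. Since $Y$ is $G$-stable, the restriction-to-$Y$ map on the bottom row of the bi-complex (combined with zero on the other rows) is $(G \times \C^*)$-equivariant and respects the multiplicative and dg structures. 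Quasi-isomorphism then follows since each row of the displayed bi-complex is a shift of the classical Koszul complex attached to $s$: regularity of $s$ makes this complex acyclic away from homological degree $0$ with cokernel $\Ocal_Y$, so summing over the internal $t$-shifts identifies the cohomology of $\Acal_{X \times \C^*}$ with $\Ocal_Y[t, t^{-1}]$, and on cohomology $\psi$ becomes the identity.

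The main obstacle is not conceptual but bookkeeping: one must keep the two gradings (homological and internal) aligned so that $\psi$ is a genuine morphism of equivariant dg-algebras and not merely of complexes, and so that each row of $\Acal_{X \times \C^*}$ is really the untwisted Koszul complex for $s$. Once this is set up, Lemma \ref{QuasiIso} applied to the $(G \times \C^*)$-equivariant quasi-isomorphism $\psi$ yields the equivalence $\Dcal \QCoh^{G \times \C^*}(\Acal_{X \times \C^*}) \iso \Dcal \QCoh^{G \times \C^*}(\Ocal_Y[t, t^{-1}])$, which by the second part of that lemma restricts to the desired equivalence between the coherent subcategories.
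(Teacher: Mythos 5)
Your proposal is correct and follows essentially the same route as the paper: the displayed bi-complex $\Acal_{X \times \C^*}$ is by construction a $(G \times \C^*)$-equivariant dg-algebra resolution of $\Ocal_Y[t,t^{-1}]$ via the Koszul complex of the regular section $s$, the map $\psi$ is the resulting equivariant quasi-isomorphism, and Lemma \ref{QuasiIso} then gives the equivalence of the derived categories and its restriction to coherent objects. Your additional verification of the hypotheses (K-flatness of the locally free terms, property \eqref{Assump} for smooth $X$) is a useful elaboration of what the paper leaves implicit, but it is not a different argument.
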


Consider the following bi-graded complex with $\Ecal^\vee$ in degree (-1,-2), $\Ocal_X$ in degree (0,0) and $t$ in degree $(0,-2)$.
\[ \Acal_{X \times \C} := \bigwedge \Ecal^\vee \otimes_{\Ocal_X} \Ocal_X[t]\]
with differential $d(f)=tf(s)$ and extended by Leibnitz. Observe that 
\[\Acal_{X \times \C^*}=\Acal_{X \times \C} \otimes_{\Ocal_X} \Ocal_X[t^{-1}]. \]
The bi-complex $\Acal_{X \times \C}$ fits into the setting of linear Koszul duality as it can be written in the form
\[ \Acal_{X \times \C}=\symcal_{\Ocal_X} \bigl(0 \to \Ecal^\vee \stackrel{-s^\vee}{\longrightarrow} t \Ocal_X \to 0 \bigr) \]

\begin{defin}
Let $(X,\Acal)$ be a $G \times \C^*$-equivariant dg-scheme.
\begin{enumerate}
\item The full subcategory of $\Dcal \Coh^{G \times \C^*}(\Acal)$ whose objects are locally in $\langle \Acal(i) \rangle_{i \in \Z}$, i.e. is quasi-isomorphic to a bounded complex of free $\Acal$-modules of finite rank, is denoted by $\Perf(\Acal)$. Such complexes are called perfect.
\item The full subcategory of $\Dcal \Coh^{G \times \C^*}(\Acal)$ whose objects are locally in $\langle \Ocal_X(i) \rangle_{i \in \Z}$ is denoted by $\Dcal_X \Coh^{G \times \C^*}(\Acal)$. We say that these modules are supported on $X$.
\end{enumerate}
\end{defin}

\begin{lemma}
Let $G$ be a complex reductive algebraic group acting on $X$ such that assumption \ref{Assump} is satisfied. Let $\Rcal$ be a sheaf of dg-algebras sitting in non-positive homological and non-positive internal degrees with $\Rcal_0^0=\Ocal_X$ and $H^0(\Rcal)_0=\Ocal_X$. If $\Mcal$ is a coherent module over $\Rcal$ and $H(\Mcal)$ is coherent when considered as a module over $\Ocal_X$, then $\Mcal$ is locally in $\langle \Ocal_X(i) \rangle_{i \in \Z}$.
\end{lemma}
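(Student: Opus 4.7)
The plan is to argue by a double induction, working locally on $X$ so we may assume $X$ is affine. Since $H(\Mcal)$ is coherent as a bi-graded $\Ocal_X$-module it is supported in only finitely many bidegrees; the outer induction will be on the number of distinct internal degrees in which $H(\Mcal)$ is nonzero.

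The key structural observation is that because $\Rcal^i_j = 0$ whenever $j > 0$, for any integer $k$ the sub-$\Ocal_X$-module
\[ \Mcal_{\le k} \;:=\; \bigoplus_{j \le k} \Mcal^\bullet_j \;\subseteq\; \Mcal \]
is preserved both by the differential (which has internal degree $0$) and by the $\Rcal$-action (which can only decrease the internal degree), hence is a sub-dg-$\Rcal$-module satisfying $H(\Mcal_{\le k}) = H(\Mcal)_{\le k}$. Letting $j_{\min}$ denote the smallest internal degree in which $H(\Mcal)$ is nonzero, the distinguished triangle
\[ \Mcal_{\le j_{\min}} \to \Mcal \to \Mcal/\Mcal_{\le j_{\min}} \to \]
exhibits $\Mcal$ as an extension whose quotient has cohomology supported in strictly fewer internal degrees; by the outer induction this quotient lies in $\langle \Ocal_X(i) \rangle_{i \in \Z}$, so it suffices to handle $\Mcal_{\le j_{\min}}$, whose cohomology is concentrated in the single internal degree $j_{\min}$. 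For such a module the sub $\Mcal_{\le j_{\min}-1}$ is acyclic, so the quotient $\Mcal_{\le j_{\min}}/\Mcal_{\le j_{\min}-1}$ is a quasi-isomorphic replacement concentrated in that single internal degree, and may then be viewed as a dg-module over the internal-degree-zero sub-dg-algebra $\Rcal_0^\bullet$, which is non-positively graded homologically and satisfies $H^0(\Rcal_0) = \Ocal_X$.

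A standard Postnikov filtration for dg-modules over a non-positively graded dg-algebra then completes the argument: any $\Rcal_0^\bullet$-dg-module with $\Ocal_X$-coherent cohomology is a finite iterated extension of its cohomology sheaves placed in appropriate homological degrees, each of which is an $\Ocal_X$-coherent sheaf (the only part of $H(\Rcal_0)$ that preserves a single homological degree is $H^0(\Rcal_0) = \Ocal_X$) and hence trivially lies in $\langle \Ocal_X(i) \rangle_{i \in \Z}$.

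The main technical hurdle I anticipate is the careful verification that $\Mcal_{\le k}$ is a sub-dg-$\Rcal$-module computing $H(\Mcal)_{\le k}$ on cohomology in the bi-graded setting, together with the justification that truncation of a dg-module over the non-positively graded dg-algebra $\Rcal_0^\bullet$ yields genuine dg-module representatives in each step of the Postnikov tower. Once these two points are in place the outer internal-grading induction and the inner Postnikov tower are routine, and only the hypotheses $\Rcal^0_0 = \Ocal_X$ and $H^0(\Rcal)_0 = \Ocal_X$ are used to identify the base case with $\Ocal_X$-coherent sheaves.
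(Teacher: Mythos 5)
Your overall strategy is essentially the paper's: a truncation-based induction that peels the cohomology off one bidegree at a time, using that $\Rcal$ is concentrated in non-positive homological \emph{and} internal degrees to guarantee that the truncations are genuine sub- or quotient dg-$\Rcal$-modules, until one is reduced to a coherent $\Ocal_X$-sheaf concentrated in a single bidegree on which everything in $\Rcal$ except $\Rcal_0^0=\Ocal_X$ acts by zero. The paper runs a single induction on the number of nonzero pairs $(i,j)$ with $H^i(\Mcal)_j\neq 0$, truncating first in the homological direction and then isolating the lowest internal degree of the lowest cohomology sheaf; you truncate first in the internal direction and then run a Postnikov tower in the homological direction. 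That reordering is harmless, and your verification that $\Mcal_{\le k}$ is a sub-dg-module computing $H(\Mcal)_{\le k}$ is correct (the differential preserves internal degree and the $\Rcal$-action can only lower it).

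The one genuine gap is the terminal step, which you declare trivial: ``each of which is an $\Ocal_X$-coherent sheaf \dots and hence trivially lies in $\langle\Ocal_X(i)\rangle_{i\in\Z}$.'' This is not trivial, and as stated it is false for a general $X$ satisfying only property \eqref{Assump}: locally, $\langle\Ocal_X(i)\rangle_{i\in\Z}$ is the category of perfect complexes, so a coherent sheaf lies in it precisely when it admits a finite locally free resolution. This is exactly the point at which the proof must invoke the regularity (smoothness) of $X$ --- a standing hypothesis of the section that does not appear in the lemma's statement and never appears in your argument --- together with the equivariant resolution property, to produce a finite $G$-equivariant free resolution of the coherent sheaf in question; one must also observe that this resolution, with $\Rcal$ acting through the augmentation $\Rcal\to\Rcal_0^0=\Ocal_X$, is a resolution in the category of $\Rcal$-dg-modules. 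The paper supplies precisely this sentence (``Since $X$ is smooth there exists a finite $G$-equivariant free resolution of $\Fcal_m$\dots''). With it, your argument closes; without it, the layers of your Postnikov tower have not been shown to lie in $\langle\Ocal_X(i)\rangle_{i\in\Z}$ and the induction does not start.
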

\begin{proof}
In the non-equivariant setting this is \cite[Lemma 3.3]{Isik}. Under the assumption that \eqref{Assump} is satisfied the proof extends to the equivariant setting. We recall it here.
The property is local so we may assume that $X$ is affine. Assume that $H(\Mcal)$ is coherent as a $\Ocal_X$-module. In particular the cohomology of $\Mcal$ is bounded above and below and there are only finitely many pairs $(i,j)$ such that $H^i(\Mcal)_j\neq 0$. The proof is by induction on the number of such pairs. Acyclic modules are in $\langle \Ocal_X(i) \rangle_{i \in \Z}$ so the start is clear.

Let $n$ be the lowest degree such that $H^n(\Mcal)\neq 0$. Then $\Mcal$ is quasi-isomorphic to the truncated complex 
\[ \tau_{\geq n} \Mcal = \quad \cdots \to 0 \to \coker d_\Mcal^{n-1} \to \Mcal^{n+1} \to \Mcal^{n+2} \to \cdots \]
The complex $\tau_{\geq n} \Mcal$ is also a $\Rcal$-module since $\Rcal$ sits in non-positive homological degrees. Let $\Fcal$ be the kernel of the morphism
\[ d^n: \coker d_\Mcal^{n-1} \to \Mcal^{n+1}. \]
The assumption on $\Rcal$ means that $\Fcal$ is a $\Rcal$-submodule of $\tau_{\geq n} \Mcal$. Notice that $\Fcal \iso H^n(\Mcal)$, so $\Fcal$ is coherent as an $\Ocal_X$-module.

Let $m$ be the lowest internal degree such that $\Fcal_m \iso H^n(\Mcal)_m \neq 0$. Since $\Rcal$ sits in non-positive both homological and internal degree $\Rcal_j^i$ acts by zero on the coherent $\Ocal_X$-module $\Fcal_m$ for $(i,j) \neq (0,0)$. Thus, $\Fcal_m$ which is concentrated in degree $(n,m)$, is also an $\Rcal$-module.

Since $X$ is smooth there exist a finite $G$-equivariant free resolution of $\Fcal_m$. Hence, $\Fcal_m$ is quasi-isomorphic to a complex of $\Ocal_X$-modules of the form
\[ 0 \to \Ocal_X^{\oplus r_k}(m) \to \cdots \to \Ocal_X^{\oplus r_2}(m) \to \Ocal_X^{\oplus r_1}(m) \to 0.\]
Since $\Rcal_j^i$ acts by zero on $\Fcal_m$ for $(i,j) \neq (0,0)$ and $\Rcal_0^0=\Ocal_X$ this complex is also quasi-isomorphic to $\Fcal_m$ as a $\Rcal$-module with $\Rcal$ acting trivially except for the $(0,0)$ piece. Hence, $\Fcal_m$ represents an object in $\langle \Ocal_X(i) \rangle_{i \in \Z}$. The cone of the inclusion $\Fcal_m \hookrightarrow \tau_{\geq n} \Mcal$ has the same cohomology as $\Mcal$ except for the piece in degree $(n,m)$ which is zero. By induction the cone is in $\langle \Ocal_X(i) \rangle_{i \in \Z}$ so $\tau_{\geq n} \Mcal$ and consequently $\Mcal$ are in $\langle \Ocal_X(i) \rangle_{i \in \Z}$.
\end{proof}

\begin{prop}\label{AcalQuotient}
There is an equivalence of categories
\[ \frac{\Dcal \Coh^{G \times \C^*}(\Acal_{X \times \C})}{\Dcal_X \Coh^{G \times \C^*}(\Acal_{X \times \C})} \iso \Dcal \Coh^{G \times \C^*}(\Acal_{X \times \C^*}). \]
\end{prop}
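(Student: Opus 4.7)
The plan is to introduce the flat localization functor
\[ j^\ast = -\otimes_{\Acal_{X\times\C}}\Acal_{X\times\C^\ast} : \Dcal\Coh^{G\times\C^\ast}(\Acal_{X\times\C}) \to \Dcal\Coh^{G\times\C^\ast}(\Acal_{X\times\C^\ast}) \]
obtained by inverting the central variable $t$, to identify its kernel with $\Dcal_X\Coh^{G\times\C^\ast}(\Acal_{X\times\C})$, and then to invoke Verdier's localization theorem. Since $\Acal_{X\times\C^\ast}=\Acal_{X\times\C}[t^{-1}]$, the extension is flat, so $j^\ast$ is exact, preserves $G\times\C^\ast$-equivariance, and sends coherent dg-modules to coherent dg-modules. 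Regarded as an $\Acal_{X\times\C}$-module via the projection $\Acal_{X\times\C}\twoheadrightarrow\Ocal_X$ that kills both $t$ and $\Ecal^\vee$, the generator $\Ocal_X$ satisfies $j^\ast\Ocal_X = \Ocal_X[t^{-1}]=0$; triangulated closure then shows that $j^\ast$ annihilates every $\Ocal_X(i)$ and hence the whole of $\Dcal_X\Coh^{G\times\C^\ast}(\Acal_{X\times\C})$, so $j^\ast$ descends to the Verdier quotient.

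To identify the kernel, suppose $j^\ast\Mcal = 0$ for $\Mcal\in\Dcal\Coh^{G\times\C^\ast}(\Acal_{X\times\C})$. Exactness of localization gives $\Hcal(\Mcal)[t^{-1}] = 0$, so the finitely generated $\Hcal(\Acal_{X\times\C})$-module $\Hcal(\Mcal)$ is $t$-torsion, and Noetherianity forces a uniform bound $t^N \Hcal(\Mcal) = 0$. The quotient $\Hcal(\Acal_{X\times\C})/(t^N)$ consists of $\Ocal_X$ in internal degree zero together with copies of $\Ocal_Y\cdot t^k$ for $1 \leq k \leq N-1$, so it is finitely generated as an $\Ocal_X$-module; hence so is $\Hcal(\Mcal)$. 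The lemma preceding the proposition---applied to $\Rcal=\Acal_{X\times\C}$, which sits in non-positive bi-degrees with $\Rcal_0^0 = \Ocal_X$ and $H^0(\Rcal)_0 = \Ocal_X$---then places $\Mcal$ in $\langle\Ocal_X(i)\rangle_{i\in\Z}$, i.e.\ in $\Dcal_X\Coh^{G\times\C^\ast}(\Acal_{X\times\C})$.

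For essential surjectivity, given $\Ncal\in\Dcal\Coh^{G\times\C^\ast}(\Acal_{X\times\C^\ast})$, I would use Noetherianity of $\Hcal(\Acal_{X\times\C^\ast})$ to choose a finite equivariant presentation of $\Hcal(\Ncal)$, clear $t$-denominators so that generators and relations come from $\Hcal(\Acal_{X\times\C})$, and lift to a coherent $\Acal_{X\times\C}$-dg-module $\Mcal$ using the equivariant ample family on $X$ from assumption~\eqref{Assump}; exactness of $j^\ast$ then yields $j^\ast\Mcal \simeq \Ncal$. Full faithfulness of the induced functor on the Verdier quotient follows by the standard calculus of fractions, the kernel identification above guaranteeing that $j^\ast$-acyclic cones lie in $\Dcal_X\Coh^{G\times\C^\ast}(\Acal_{X\times\C})$. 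I expect the main technical obstacle to be precisely this equivariant coherent lift: producing compatible $G$-equivariant finite presentations across all internal $\C^\ast$-degrees is the only place where the geometry of $X$ is genuinely used, and once it is handled the rest is the natural equivariant adaptation of Isik's argument.
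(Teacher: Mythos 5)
Your localization functor $j^*$ is the paper's $\phi_*$, and your identification of its kernel with $\Dcal_X \Coh^{G \times \C^*}(\Acal_{X \times \C})$ is correct and, if anything, cleaner than the paper's computation with $\coker(\alpha_i)$ and $\ker(\alpha_{i+1})$: exactness of inverting $t$ plus Noetherianity gives a uniform bound $t^N \Hcal(\Mcal)=0$, hence coherence of $\Hcal(\Mcal)$ over $\Ocal_X$, and the preceding lemma applies. (A minor inaccuracy: $\Hcal(\Acal_{X \times \C})/(t^N)$ also contains the higher cohomology of the truncated Koszul complexes in internal degrees $-2,\dots,-2(n-1)$, not only $\Ocal_X$ and the $\Ocal_Y t^k$; since these terms occupy finitely many bidegrees and are coherent over $\Ocal_X$, your conclusion stands.)

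The genuine gap is in your last paragraph. Knowing that $\ker j^* = \Dcal_X \Coh^{G \times \C^*}(\Acal_{X \times \C})$ does \emph{not} imply that the induced functor on the Verdier quotient is fully faithful: for a triangulated functor $F$ with $F(\Kcal)=0$ and $\ker F = \Kcal$, the induced functor $\Tcal/\Kcal \to \Scal$ can fail to be full or faithful (derived restriction $D^b\Coh(\mathbb{A}^1) \to D^b\Coh(\mathrm{pt})$ at the origin is a standard example: its kernel is exactly the complexes supported away from $0$, yet the induced functor on the quotient is neither full nor faithful). What actually makes $j^*$ a Verdier quotient is the existence of an explicit functor in the opposite direction with controlled unit and counit, and this is precisely the ingredient the paper supplies and you omit: the truncation $\phi^*(\Ncal)=\Ncal_{\le 0}$ to non-positive internal degrees. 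Because $\Acal_{X \times \C}$ sits in non-positive internal degrees, $\Ncal_{\le 0}$ is an $\Acal_{X \times \C}$-submodule of $\Ncal$, it is coherent, and $j^*(\Ncal_{\le 0}) \cong \Ncal$; this disposes of essential surjectivity without your denominator-clearing lift (which is exactly the delicate dg-level step you flag but do not carry out), and the cone computation for $\id \to \phi^* j^*$ is what converts your kernel identification into full faithfulness. To complete your route without the truncation you would need, at minimum, a cofinality statement for the roofs $t^N : \Mcal(2N) \to \Mcal$ among morphisms with cone in $\Dcal_X$ together with a localization formula for Hom-sheaves under $j^*$; neither is addressed, so as written the proof of the equivalence is incomplete.
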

\begin{proof}
In the non-equivariant setting this is \cite[Prop. 3.2]{Isik}. In the proof Isik uses the inclusion morphism $\phi : \Acal_{X \times \C} \to \Acal_{X \times \C}[t^{-1}]=\Acal_{X \times \C^*}$ to construct two functors
\begin{gather}
\phi_* : \Dcal \QCoh(\Acal_{X \times \C}) \to \Dcal \QCoh(\Acal_{X \times \C^*}), \\\
\Mcal \mapsto \Acal_{X \times \C^*} \otimes_{\Acal_{X \times \C}} \Mcal \iso \Ocal_X[t,t^{-1}] \otimes_{\Ocal_X[t]} \Mcal,\\
\phi^* : \Dcal \QCoh(\Acal_{X \times \C^*}) \to \Dcal \QCoh(\Acal_{X \times \C}), \\
 \Ncal \mapsto \Ncal_{\leq 0}.
\end{gather}

He proves that $\phi_*$ factors through $\Dcal_X \Coh(\Acal_{X \times \C})$ and that the functors induce mutually inverse equivalences of categories
\[\frac{\Dcal \Coh(\Acal_{X \times \C})}{\Dcal_X \Coh(\Acal_{X \times \C})} \xymatrix@=3em{ \ar@<0.7ex>[r]^-{\phi_*}_-\sim & \Dcal \Coh(\Acal_{X \times \C^*}) \ar@<0.7ex>[l]^-{\phi^*} } \]
Both functors naturally extend to the equivariant setting so we get functors
\[\frac{\Dcal \Coh^{G \times \C^*}(\Acal_{X \times \C})}{\Dcal_X \Coh^{G \times \C^*}(\Acal_{X \times \C})} \xymatrix@=3em{ \ar@<0.7ex>[r]^-{\phi_*} & \Dcal \Coh^{G \times \C^*}(\Acal_{X \times \C^*}) \ar@<0.7ex>[l]^-{\phi^*} } \]
As in the non-equivariant setting we need to prove that the natural transformations 
\[ \phi_* \circ \phi^* \to \id \]
given by
\[  \Acal_{X \times \C}[t^{-1}] \otimes_{\Acal_{X \times \C}} (\Ncal)_{\leq 0} \to \Ncal, \qquad a \otimes n \mapsto an. \]
and 
\[ \id \to \phi^* \circ \phi_*\]
given by
\begin{gather}
\Mcal \to (\Acal_{X \times \C}[t^{-1}] \otimes_{\Acal_{X \times \C}} \Mcal)_{\leq 0} \iso (k[t^{-1}] \otimes_k \Mcal)_{\leq 0}, \\
m \mapsto \begin{cases} 1 \otimes m  & \text{internal degree non-positive} \\ 0 & \text{else} \end{cases}
\end{gather}
are both isomorphisms.

Then first natural transformation $\phi_* \circ \phi^* \to \id$ is clearly surjective. Assume that we have a section $t^k \otimes n$ whose image is 0. Then $n=t^{-k} t^kn=0$, so the morphism is also injective. Thus, this natural transformation is an isomorphism

Let $\Jcal$ be the cone of the morphism $\Mcal \to \phi^* \phi_* \Mcal$ from the second natural transformation. We want to show that $\Jcal$ is supported on $X$. By the lemma it is enough to show that $H(\Jcal)$ is coherent over $\Ocal_X$. Consider the long exact sequence of sheaves of $\Ocal_X$-modules in cohomology
\[ \cdots \to H^i(\Mcal) \to H^i(\phi^* \phi_* \Mcal) \to H^i(\Jcal) \to H^{i+1}(\Mcal) \to H^{i+1}(\phi^* \phi_* \Mcal) \to \cdots \]
So we get short exact sequences
\[  0 \to \coker(\alpha_i) \to H^i(\Jcal) \to \ker(\alpha_{i+1}) \to 0,\]
where
\[ \alpha_i : H^i(\Mcal) \to H^i((\Ocal_X[t,t^{-1}] \otimes_{\Ocal_X[t]} \Mcal)_{\leq 0}) \]
is the induced map on cohomology. From the short exact sequence it follows that $H(\Jcal)$ is coherent over $\Ocal_X$ if $\coker(\alpha_i)$ and $\ker(\alpha_{i+1})$ are. Recall what the terms in $\Acal_{X \times \C}$ look like
\[ \xymatrix{\cdots \ar[r]  &0 \ar[r] & \Lambda^2 \Ecal^\vee \ar[r] & \Ecal^\vee t \ar[r] & \Ocal_X t^2 & i=-4\\
\cdots \ar[r]  & 0 \ar[r] & 0 \ar[r] & \Ecal^\vee \ar[r] & \Ocal_X t & i=-2\\
\cdots \ar[r] & 0 \ar[r] & 0 \ar[r]  & 0 \ar[r] & \Ocal_X & i=0} \]
In each degree we have a truncation of the resolution of $\Ocal_Y$. In degrees lower than $-n$ we have the full resolution so the only non-zero cohomology in lower degrees are of the form $t^k \Ocal_Y$. All individual terms in $H(\Acal_{X \times \C})$ are coherent over $\Ocal_X$ so the only way $\coker(\alpha_i)$ and $\ker(\alpha_{i+1})$ could fail to be coherent is if infinitely many powers of $t$ are required to generate them. However, everything in $\ker(\alpha_{i+1})$ sits in strictly positive internal degree and since the internal degree of $t$ is $-2$ this is not the case. Isik shows that $H(\Ocal_X[t,t^{-1}] \otimes_{\Ocal_X[t]} \Mcal) \iso \Ocal_X[t,t^{-1}] \otimes_{\Ocal_X[t]} H(\Mcal)$ so elements in $\coker(\alpha_{i})$ are represented by $t^{-k} \otimes m$ with $2k+\deg_i(m) \leq 0$, where $\deg_i(m)$ is the internal degree of $m$. Thus, $\coker(\alpha_i)$ is also coherent over $\Ocal_X$ and we get the result.
\end{proof}

The Koszul dual to $\Acal_{X \times \C}$ is the dg-algbera
\[ \Bcal :=\symcal( 0 \to \epsilon \Ocal_X \stackrel{s}{\longrightarrow} \Ecal \to 0). \]
Here $\epsilon$ is a formal variable with $\epsilon^2=0$ sitting in homological degree -1 and internal degree 2. This is just a convenient notation expressing the fact that $\Lambda^n \Ocal_X=0$ for $n \geq 2$. $\Ecal$ is in homological degree 0 and internal degree 2. As a complex $\Bcal$ is $\epsilon \Sym \Ecal \to \Sym \Ecal$ with differential $d_\Bcal(\epsilon f)=sf$.

The functor $\kappa$ from theorem \ref{EquiKoszulDual} gives an equivalence of categories
\[ \kappa : \Dcal \Coh^{G \times \C^*}(\Bcal) \isomap \Dcal \Coh^{G \times \C^*}(\Acal_{X \times \C})^{op}. \]

\begin{lemma}
The functor $\kappa$ restricts to an equivalence 
\[\operatorname{Perf}(\Bcal) \iso \Dcal_X \Coh^{G \times \C^*}(\Acal_{X \times \C})^{op}.\]
\end{lemma}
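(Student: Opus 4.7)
The strategy is to match $\kappa$ with the two families of local generators. By definition $\Perf(\Bcal)$ is the thick subcategory locally generated by $\{\Bcal(i)\}_{i\in \Z}$, while $\Dcal_X\Coh^{G\times\C^*}(\Acal_{X\times\C})$ is that locally generated by $\{\Ocal_X(i)\}_{i \in \Z}$. Since $\kappa$ is a shift-compatible triangulated equivalence by Theorem \ref{EquiKoszulDual}, the lemma reduces to the single computation $\kappa(\Bcal) \iso \Ocal_X(m_0)$ for some fixed $m_0 \in \Z$: combined with the identity $\kappa(\Mcal[n](m)) = \kappa(\Mcal)[-n+m](-m)$, this would imply that the two generating families are matched up to reindexing.

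For the core computation, I would unwind the composition $\kappa = \xi \circ \bar{\mathscr{A}} \circ D_\Omega$ applied to the free rank-one module $\Bcal$. Since $X$ is smooth and $G$-equivariant, the dualizing object $\Omega$ is (an equivariant shift of) a line bundle, so $D_\Omega(\Bcal)$ is, up to that fixed twist and shift, again the free $\Bcal$-module. The functor $\bar{\mathscr{A}}$ then produces $\Scal \otimes_{\Ocal_X} \Bcal$ equipped with its canonical differential $d_1 + d_2$, where locally $d_2(s\otimes m) = (-1)^{|s|}\sum_\alpha sx_\alpha^* \otimes x_\alpha \cdot m$; this identifies the output with the total Koszul complex attached to the dual pair $(\Xcal,\Ycal)$. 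A direct local inspection, using that the pair $\epsilon\Ocal_X \xrightarrow{s} \Ecal$ is formed from a regular section, shows that this bi-complex is acyclic except in one bi-degree, where its cohomology is $\Ocal_X$. Applying the regrading functor $\xi$ finally places the surviving cohomology in a definite internal degree $m_0$, giving $\kappa(\Bcal) \iso \Ocal_X(m_0)$ in $\Dcal\Coh^{G\times\C^*}(\Acal_{X\times\C})$, an object manifestly lying in $\Dcal_X\Coh^{G\times\C^*}(\Acal_{X\times\C})$.

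With $\kappa(\Bcal) \iso \Ocal_X(m_0)$ in hand, the shift relation gives $\kappa(\Bcal[n](m)) \iso \Ocal_X[-n+m](m_0 - m)$, so as $(n,m)$ range over $\Z \times \Z$ the images sweep out every shift of $\Ocal_X$ up to isomorphism. Since $\kappa$ is exact and the source subcategory is the smallest thick subcategory (closed under local equivalence) containing the $\Bcal(i)$, it maps $\Perf(\Bcal)$ into the smallest thick subcategory containing all shifts of $\Ocal_X$, which is precisely $\Dcal_X\Coh^{G\times\C^*}(\Acal_{X\times\C})^{op}$; conversely every local generator of the target lies in this essential image, so the restriction is essentially surjective, and full faithfulness is inherited from the ambient equivalence.

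The main obstacle will be the explicit analysis of $\bar{\mathscr{A}}(\Bcal)$: one must carefully track the bi-gradings and signs produced by $d_1$ and $d_2$ (together with the fixed twist/shift introduced by $D_\Omega$), and recognize the resulting total complex as a Koszul resolution of $\Ocal_X$ concentrated in a single internal degree. Once this identification is secured, the rest of the argument is formal bookkeeping about thick subcategories generated by a family of objects.
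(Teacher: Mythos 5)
Your proposal is correct and follows essentially the same route as the paper: reduce via the shift relation and locality to the single computation $\kappa(\Bcal)\iso\Ocal_X$, then unwind $\xi\circ\bar{\mathscr{A}}\circ D_\Omega$ using that $\Omega$ is locally a (shifted) line bundle and that the resulting Koszul-type complex $\bar{\mathscr{A}}(\Bcal^\vee)$ is quasi-isomorphic to $\Ocal_X$ (the paper cites \cite[Lemma 2.6.1]{MR1} for this step, which in fact holds for any two-term complex of vector bundles, so your appeal to regularity of $s$ is unnecessary though harmless).
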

\begin{proof}
In the non-equivariant setting this is similar to proposition 3.1 in \cite{Isik}. However, Isik uses the Koszul duality from \cite{MR1}, which is slightly different from the linear Koszul duality from \cite{MR3} that we are using, so some additional arguments are needed. The functor $\kappa$ is defined locally so for any open subset $i : U \hookrightarrow X$ the following diagram is commutative
\[ \xymatrix{\Dcal \Coh^{G \times \C^*}(\Bcal) \ar[r]^-\kappa \ar[d]_{i^*} & \Dcal \Coh^{G \times \C^*}(\Acal_{X \times \C}) \ar[d]_{i^*} \\
\Dcal \Coh^{G \times \C^*}(\Bcal|_U) \ar[r]^-{\kappa|_U} & \Dcal \Coh^{G \times \C^*}(\Acal_{X \times \C}|_U)} \]
It suffices to show that for any open affine $U$ the functor $\kappa|_U$ takes modules of the form $\Bcal(i)$ to objects in $\langle \Ocal_X(i) \rangle_{i \in \Z}$. Theorem \ref{EquiKoszulDual} states that $\kappa(\Mcal[j](i))=\kappa(\Mcal)[-j+i](-i)$, so it is enough to prove the statement for $i=0$.

Recall that $\kappa$ is the composition of three functors $\xi$, $\bar{\mathscr{A}}$ and $D_\Omega$. Since $X$ is smooth the dualizing sheaf $\Omega$ is just top forms shifted by dimension. In particular, it is locally free of rank 1 so the functor $\Hcal om(-,\Omega)$ is exact and there is no need to take the injective resolution $I_\Omega$. That $\Omega$ is a line bundle implies that locally $D_\Omega(\Bcal) \iso \Hcal om(\Bcal,\Ocal_X)$. Applying $\bar{\Acal}$ on the opposite category corresponds to reversing the grading for all dg-modules. Thus, $\bar{\Acal} \circ D_\Omega(\Bcal)$ corresponds to $\bar{\Acal}(\Bcal^\vee)$, where $(\Bcal^\vee)_j^i=\Hcal om(\Bcal_{-j}^{-i},\Ocal_X)$. By \cite[Lemma 2.6.1]{MR1} the projection on the $(0,0)$-component $\bar{\Acal}(\Bcal^\vee) \to \Ocal_X$ is a quasi-isomorphism. Clearly, $\xi(\Ocal_X) =\Ocal_X$. This finishes the proof.
\end{proof}

\begin{defin}
The singularity category of a dg-algebra $\Acal$ is the Verdier quotient
\[ \Dcal_{sg}^{G \times \C^*}(\Acal) := \frac{\Dcal \Coh^{G \times \C^*}(\Acal)}{\Perf(\Acal)}. \]
\end{defin}

\begin{cor}
There is an equivalence of categories
\[ \Dcal^{G \times \C^*}_{sg}(\Bcal) \iso \frac{\Dcal \Coh^{G \times \C^*}(\Acal_{X \times \C})^{op}}{\Dcal_X \Coh^{G \times \C^*}(\Acal_{X \times \C})^{op}}. \]
\end{cor}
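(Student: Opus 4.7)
The plan is to derive this corollary directly from the preceding lemma by exploiting the universal property of the Verdier quotient. The previous lemma supplies a commutative square in which the top row is an equivalence (Theorem \ref{EquiKoszulDual} applied to the present situation) and the left vertical arrow is the inclusion $\Perf(\Bcal) \hookrightarrow \Dcal\Coh^{G\times\C^*}(\Bcal)$, while the right vertical arrow is the inclusion $\Dcal_X\Coh^{G\times\C^*}(\Acal_{X\times\C})^{op} \hookrightarrow \Dcal\Coh^{G\times\C^*}(\Acal_{X\times\C})^{op}$, and $\kappa$ restricts to an equivalence between these two subcategories.

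First I would record that both subcategories are triangulated (thick) subcategories of the ambient triangulated categories: $\Perf(\Bcal)$ is by construction the thick subcategory generated by the twists $\Bcal(i)$, and $\Dcal_X\Coh^{G\times\C^*}(\Acal_{X\times\C})$ is the thick subcategory generated by the twists $\Ocal_X(i)$. Hence the Verdier quotients on both sides of the claimed equivalence are well-defined triangulated categories.

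Next I would invoke the standard fact about Verdier quotients: if $F : \Ccal \to \Dcal$ is an equivalence of triangulated categories and $\Ccal' \subseteq \Ccal$, $\Dcal' \subseteq \Dcal$ are thick subcategories such that $F$ restricts to an equivalence $\Ccal' \isomap \Dcal'$, then $F$ descends to an equivalence $\Ccal/\Ccal' \isomap \Dcal/\Dcal'$. Applying this to $F=\kappa$ with $\Ccal' = \Perf(\Bcal)$ and $\Dcal' = \Dcal_X\Coh^{G\times\C^*}(\Acal_{X\times\C})^{op}$ yields the induced equivalence
\[ \bar{\kappa} : \frac{\Dcal\Coh^{G\times\C^*}(\Bcal)}{\Perf(\Bcal)} \isomap \frac{\Dcal\Coh^{G\times\C^*}(\Acal_{X\times\C})^{op}}{\Dcal_X\Coh^{G\times\C^*}(\Acal_{X\times\C})^{op}}. \]
By the definition of the equivariant singularity category the left-hand side is precisely $\Dcal^{G\times\C^*}_{sg}(\Bcal)$, which gives the desired equivalence.

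There is essentially no genuine obstacle here; the content of the corollary is entirely contained in the previous lemma. The only mild point of care is checking that both subcategories are thick (closed under direct summands), so that the Verdier quotients behave as expected and the restricted equivalence genuinely descends. This follows immediately from their descriptions as subcategories locally generated by explicit sets of objects under shifts, twists, and triangles.
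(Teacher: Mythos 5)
Your argument is correct and is exactly the one the paper intends: the corollary is stated without proof there precisely because it follows immediately from the preceding lemma (that $\kappa$ restricts to an equivalence $\Perf(\Bcal) \iso \Dcal_X \Coh^{G \times \C^*}(\Acal_{X \times \C})^{op}$) together with the fact that an equivalence of triangulated categories restricting to an equivalence of thick subcategories descends to the Verdier quotients. Nothing further is needed.
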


Recall the notation from the beginning of this section. Let $\pi^\vee : E^\vee \to X$ denote the dual vector bundle. This defines a pull-back section via the cartesian diagram
\[ \xymatrix{E^\vee \times_X E \ar[r] & E \\ E^\vee \ar[u]^{(\pi^\vee)^*s} \ar[r]^{\pi^\vee} & X \ar[u]^s} \]
Define the function $W$ to be the composition with the natural pairing
\[ W: E^\vee \to E^\vee \times_X E \stackrel{\langle \;, \: \rangle}{\longrightarrow} \C, \qquad a_x \mapsto \langle ((\pi^\vee)^*s)(a_x), a_x \rangle. \]

Set $Z:=W^{-1}(0)$. There is a short exact sequence
\[0 \to \epsilon \symcal \Ecal \stackrel{s}{\longrightarrow} \symcal \Ecal \longrightarrow \pi_* \Ocal_Z \to 0. \]
Thus, the map $\phi : \Bcal \to \pi_* \Ocal_Z$ sending $\symcal \Ecal$ to $\pi_* \Ocal_Z$ and $\epsilon$ to 0 is a quasi-isomorphism of sheaves of equivariant graded dg-algebras. Proposition \ref{QuasiIso} gives an equivalence
\[ \Dcal \Coh^{G \times \C^*}(\Bcal) \iso \Dcal \Coh^{G \times \C^*}(\pi_* \Ocal_Z). \]
The equivalence takes $\Bcal$ to $\pi_* \Ocal_Z$ so it descends to the singularity categories.

\begin{lemma}\label{BsingPi}
There is an equivalence of categories $\Dcal^{G \times \C^*}_{sg}(\Bcal) \iso \Dcal^{G \times \C^*}_{sg}(\pi_* \Ocal_Z)$.
\end{lemma}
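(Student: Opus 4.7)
The proof will be essentially a formal consequence of the quasi-isomorphism $\phi : \Bcal \to \pi_* \Ocal_Z$ already constructed in the paragraph just before the statement. The plan is to (a) apply Lemma \ref{QuasiIso} to get an equivalence on $\Dcal\Coh^{G\times\C^*}$, (b) verify that this equivalence preserves the subcategory of perfect complexes, and (c) pass to Verdier quotients.

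First I would view the quasi-isomorphism $\phi$ as a $G \times \C^*$-equivariant morphism of dg-schemes $f : (X, \pi_* \Ocal_Z) \to (X, \Bcal)$ (identity on underlying schemes), and invoke Lemma \ref{QuasiIso} to obtain an equivalence
\[ Lf^* : \Dcal \Coh^{G \times \C^*}(\Bcal) \isomap \Dcal \Coh^{G \times \C^*}(\pi_* \Ocal_Z), \qquad \Mcal \mapsto \pi_* \Ocal_Z \otimes^L_\Bcal \Mcal, \]
with quasi-inverse the restriction-of-scalars functor $f_*$ along $\phi$.

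Next I would check that $Lf^*$ restricts to an equivalence $\Perf(\Bcal) \isomap \Perf(\pi_* \Ocal_Z)$. By definition, $\Perf(\Acal)$ consists of objects locally quasi-isomorphic to bounded complexes of finite direct sums of the shifted free modules $\Acal(i)$, $i \in \Z$. Since $\phi$ is a morphism of dg-algebras sending unit to unit, one has $Lf^*(\Bcal(i)) = \pi_* \Ocal_Z \otimes^L_\Bcal \Bcal(i) \iso (\pi_*\Ocal_Z)(i)$ and, symmetrically, $f_*((\pi_*\Ocal_Z)(i))$ is quasi-isomorphic to $\Bcal(i)$ via $\phi$. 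Because $Lf^*$ is an exact equivalence and the tensor product is computed sheaf-wise (so the local-generation condition is preserved), it must send the thick subcategory locally generated by $\{\Bcal(i)\}_{i \in \Z}$ isomorphically onto the thick subcategory locally generated by $\{(\pi_*\Ocal_Z)(i)\}_{i \in \Z}$, giving the desired restriction.

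Finally, taking the Verdier quotient of each side by its subcategory of perfect objects yields
\[ \Dcal^{G\times\C^*}_{sg}(\Bcal) = \frac{\Dcal\Coh^{G\times\C^*}(\Bcal)}{\Perf(\Bcal)} \isomap \frac{\Dcal\Coh^{G\times\C^*}(\pi_*\Ocal_Z)}{\Perf(\pi_*\Ocal_Z)} = \Dcal^{G\times\C^*}_{sg}(\pi_*\Ocal_Z). \]
There is no real obstacle here: the only substantive point is the preservation of perfectness, and this is immediate from the formula $Lf^*(\Bcal(i)) \iso (\pi_*\Ocal_Z)(i)$. The proof is short enough that one could even just cite Lemma \ref{QuasiIso} directly together with the observation that a quasi-isomorphism of dg-algebras induces an equivalence on singularity categories.
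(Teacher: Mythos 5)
Your proposal is correct and follows essentially the same route as the paper: the paper also observes that $\phi:\Bcal\to\pi_*\Ocal_Z$ is a quasi-isomorphism of equivariant dg-algebras, applies Lemma \ref{QuasiIso} to get the equivalence on $\Dcal\Coh^{G\times\C^*}$, and notes that since the equivalence takes $\Bcal$ to $\pi_*\Ocal_Z$ it descends to the singularity categories. Your added detail on why perfect objects are preserved is a correct elaboration of the paper's one-line remark.
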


\begin{lemma}\label{fPush}
Let $f:X \to Y$ be a $G$-equivariant affine morphism and set $\Ccal :=f_* \Ocal_X$. Then $f_*$ induces an equivalence of categories
\[ f_* : \QCoh^G(X) \isomap \QCoh^G(\Ccal). \]
Here $\QCoh^G(\Ccal)$ denotes $G$-equivariant quasi-coherent $\Ocal_Y$-modules with a $\Ccal$-action.
\end{lemma}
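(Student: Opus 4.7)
The plan is to construct an inverse to $f_*$ by hand and then check that it respects $G$-equivariance. Since $f$ is affine, $X \iso \Spec_Y(\Ccal)$ as $Y$-schemes, so in the non-equivariant setting this is the classical equivalence between quasi-coherent sheaves on a relative spectrum and quasi-coherent modules over the corresponding sheaf of algebras; I only need to recall the inverse explicitly enough to carry equivariance through.

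The inverse $f^{!} : \QCoh(\Ccal) \to \QCoh(X)$ is built locally. Choose an affine open cover $\{U_i = \Spec A_i\}$ of $Y$, so that $f^{-1}(U_i) = \Spec B_i$ with $B_i = \Gamma(U_i, \Ccal)$. A quasi-coherent $\Ccal$-module $\Mcal$ restricts on each $U_i$ to the $A_i$-module $\Gamma(U_i, \Mcal)$ equipped with a compatible $B_i$-action, i.e.\ to a $B_i$-module, which sheafifies to a quasi-coherent sheaf on $f^{-1}(U_i)$. These glue to $f^{!}\Mcal \in \QCoh(X)$ because the construction is compatible with localisation, and the unit $\Mcal \to f_* f^{!}\Mcal$ and counit $f^{!} f_* \Fcal \to \Fcal$ are isomorphisms by inspection on affines, where both reduce to the identity on the underlying $B_i$-module.

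For the equivariant upgrade, recall that a $G$-equivariant structure on a quasi-coherent sheaf $\Fcal$ on a $G$-scheme $Z$ is an isomorphism $\theta_\Fcal : \sigma_Z^* \Fcal \isomap \pr_Z^* \Fcal$ on $G \times Z$ satisfying the usual cocycle condition, where $\sigma_Z$ is the action and $\pr_Z$ is the projection. Since $f$ is $G$-equivariant one has $\sigma_Y \circ (\id_G \times f) = f \circ \sigma_X$ together with the analogous identity for projections; thus both $f_*$ and $f^{!}$ intertwine $\sigma^*$ and $\pr^*$ on the two sides, and so transport equivariance data (and their cocycles) back and forth. Under this transport, compatibility of the $\Ccal$-action on $\Mcal$ with its equivariance isomorphism matches compatibility of the $\Ocal_X$-action on $f^{!}\Mcal$ with its equivariance isomorphism; the unit and counit are built from identities and hence automatically commute with the equivariance structures.

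The only real obstacle is bookkeeping: one has to spell out precisely what a $G$-equivariant $\Ccal$-action means, namely that the structure map $\Ccal \otimes_{\Ocal_Y} \Mcal \to \Mcal$ is a morphism of $G$-equivariant $\Ocal_Y$-modules, so that under the $\Spec_Y$ identification it corresponds exactly to a $G$-equivariant $\Ocal_X$-module structure on $f^{!}\Mcal$. Once this identification is made, the lemma is the equivariant refinement of the standard fact that quasi-coherent sheaves on $\Spec_Y(\Ccal)$ are the same as quasi-coherent $\Ccal$-modules on $Y$.
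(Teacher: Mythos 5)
Your proposal is correct and follows essentially the same route as the paper: both reduce to the classical non-equivariant equivalence for an affine morphism (relative $\Spec$, i.e.\ Hartshorne Ex.\ II.5.17) and then transport the equivariance data using the commutativity of the action/projection squares $G \times X \to G \times Y$ induced by the $G$-equivariance of $f$. The only cosmetic difference is organizational: you exhibit the quasi-inverse explicitly and carry the descent isomorphisms through it, whereas the paper verifies full faithfulness by taking $G$-invariants of Hom-spaces and essential surjectivity by base change along $1 \times f$.
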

\begin{proof}
In the non-equivariant setting this is \cite[Exercise II.5.17]{Har1}. The functor $f_*$ lifts to the equivariant setting
\[ \xymatrix{\QCoh^G(X) \ar[r]^{f_*^G} \ar[d]^{\text{For}}  & \QCoh^G(\Ccal) \ar[d]^{\text{For}} \\
\QCoh(X) \ar[r]^{f_*}_\sim & \QCoh(\Ccal)}\]
Since $f_*$ is fully-faithful we have
\begin{align}
\Hom_\Ccal^G(f_*^G(\Ncal), f_*^G(\Mcal)) &=\left(\Hom_\Ccal(f_*(\Ncal), f_*(\Mcal))\right)^G\\
&=\left(\Hom_X(\Ncal,\Mcal) \right)^G\\
&=\Hom_X^G(\Ncal,\Mcal).
\end{align}
Thus, $f_*^G$ is fully-faithful. We need to show that $f_*^G$ is essentially surjective. Let $(\Mcal, \phi) \in \QCoh^G(\Ccal)$ where $\Mcal \in \QCoh(\Ccal)$ and $\phi : ac^*_Y(\Mcal) \isomap p_Y^*(\Mcal)$. Here $ac_Y, p_Y : G \times Y \to Y$ is the action and projection morphism, respectively. Use similar notation for $X$. Since $f_*$ is essentially surjective forgetting the equivariance $\Mcal \iso f_*(\Ncal)$ for some $\Ncal \in \QCoh(X)$. Using base change we get isomorphisms
\[ \xymatrix@=3em{(1 \times f_*)ac_X^*(\Ncal) \iso ac_Y^* f_*(\Ncal) \ar[r]_\sim^\phi & p_Y^* f_*(\Ncal) \iso (1 \times f_*)p_X^*(\Ncal)} \]
Since $1 \times f_*$ is an equivalence of categories this induces an isomorphism $\psi :=(1 \times f_*)^{-1} \phi : ac_X^*(\Ncal) \isomap p_X^*(\Ncal)$. Thus, $\Ncal$ lifts to $\QCoh^G(X)$ so $f_*^G$ is essentially surjective.
\end{proof}

The morphism $\pi : Z \to X$ is affine so the lemma gives an equivalence
\[ D^b\Coh^{G \times \C^*}(Z) \iso \Dcal \Coh^{G \times \C^*}(\pi_*\Ocal_Z). \]
The equivalence descends to the singularity categories
\[\Dcal^{G \times \C^*}_{sg}(\pi_* \Ocal_Z) \iso D^{G \times \C^*}_{sg}(Z).\]

Combining all these equivalences of categories we can now prove the equivariant version of the main theorem in \cite{Isik}.

\begin{thm} \label{ZCoh}There is an equivalence of categories $D^{G \times \C^*}_{sg}(Z) \iso D^b(\Coh^G(Y))$.
\end{thm}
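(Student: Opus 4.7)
My plan is to prove the theorem by composing the chain of equivalences built up over the course of Section~\ref{Aside}. The strategy is to identify both $D^{G\times\C^*}_{sg}(Z)$ and $D^b(\Coh^G(Y))$ with the singularity category of the dg-algebra $\Bcal$: the first via affine pushforward along $\pi : Z \to X$, and the second via linear Koszul duality unwound through the intermediate dg-algebras $\Acal_{X\times\C}$, $\Acal_{X\times\C^*}$, and $\Ocal_Y[t,t^{-1}]$.

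On the $Z$-side, I would first apply Lemma~\ref{fPush} to the affine morphism $\pi : Z \to X$. The functor $\pi_*$ is exact and sends free $\Ocal_Z$-modules to free $\pi_*\Ocal_Z$-modules, so it preserves perfection; hence the equivalence descends to singularity categories, giving $D^{G\times\C^*}_{sg}(Z) \iso \Dcal^{G\times\C^*}_{sg}(\pi_*\Ocal_Z)$. Composing with Lemma~\ref{BsingPi} then lands me in $\Dcal^{G\times\C^*}_{sg}(\Bcal)$. On the $Y$-side, the corollary preceding Lemma~\ref{BsingPi} (coming from the restriction of $\kappa$ to $\Perf(\Bcal) \iso \Dcal_X\Coh^{G\times\C^*}(\Acal_{X\times\C})^{op}$), together with Propositions~\ref{AcalQuotient}, \ref{AcalCohY}, and \ref{DAcal}, yields the chain
\begin{align}
\Dcal^{G\times\C^*}_{sg}(\Bcal) &\iso \left( \frac{\Dcal\Coh^{G\times\C^*}(\Acal_{X\times\C})}{\Dcal_X\Coh^{G\times\C^*}(\Acal_{X\times\C})} \right)^{op} \\
&\iso \Dcal\Coh^{G\times\C^*}(\Acal_{X\times\C^*})^{op} \iso \Dcal\Coh^{G\times\C^*}(\Ocal_Y[t,t^{-1}])^{op} \iso D^b(\Coh^G(Y))^{op}.
\end{align}

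The main obstacle, and the only non-formal step, is the opposite category produced by Koszul duality. To eliminate it I would invoke Grothendieck--Serre duality on $Y$. Since $Y$ is the zero scheme of a regular $G$-equivariant section of the vector bundle $E$ on the smooth $G$-variety $X$, it is a local complete intersection, hence Gorenstein, and its equivariant dualizing complex $\omega_Y^\bullet$ is a shift of an equivariant line bundle. The functor $\mathrm{R}\Hcal om_{\Ocal_Y}(-,\omega_Y^\bullet)$ is then an involutive equivalence $D^b(\Coh^G(Y)) \iso D^b(\Coh^G(Y))^{op}$, and composing it with the chain above produces the desired equivalence $D^{G\times\C^*}_{sg}(Z) \iso D^b(\Coh^G(Y))$. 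Everything else in the argument is bookkeeping of equivalences that have already been proved earlier in the section.
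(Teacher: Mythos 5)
Your proposal is correct and follows essentially the same route as the paper: the identical chain of equivalences through $\pi_*\Ocal_Z$, $\Bcal$, the Verdier quotient of $\Acal_{X\times\C}$-modules, $\Acal_{X\times\C^*}$, and $\Ocal_Y[t,t^{-1}]$, followed by a duality on $Y$ to remove the opposite category. The only cosmetic difference is in that last step, where the paper justifies the existence of the duality functor $D_\Omega^Y$ by restricting the ample family of equivariant line bundles from $X$ to $Y$ (so that property \eqref{Assump} holds and Bezrukavnikov's construction of an equivariant dualizing object applies), whereas you argue via the local complete intersection/Gorenstein property of $Y$; both are valid instances of equivariant Grothendieck--Serre duality.
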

\begin{proof}
We already proved the following series of equivalences
\begin{align}
D_{sg}^{G \times \C^*}(Z) &\iso \Dcal_{sg}^{G \times \C^*}(\pi_* \Ocal_Z)\\
& \iso \Dcal_{sg}^{G \times \C^*}(\Bcal)\\
& \iso  \frac{\Dcal \Coh^{G \times \C^*}(\Acal_{X \times \C})^{op}}{\Dcal_X \Coh^{G \times \C^*}(\Acal_{X \times \C})^{op}}\\
& \iso \Dcal \Coh^{G \times \C^*}(\Acal_{X \times \C^*})^{op}\\
& \iso  \Dcal \Coh^{G \times \C^*}((\Ocal_Y[t,t^{-1}])^{op}\\
& \iso D^b (\Coh^G(Y))^{op}.
\end{align}
Since $X$ has an ample family of $G$-equivariant line bundles so does the closed subvariety $Y$ by restriction. Thus, $Y$ satisfies property \eqref{Assump} so a functor similar to $D_\Omega^X$ from section \ref{SecEquiKoszulDual} can also be constructed for $Y$. This gives an equivalence $D^b(\Coh^G(Y))^{op} \iso D^b(\Coh^G(Y))$.
\end{proof}

\section{Equivariant matrix factorizations and singularity category}

\subsection{Definitions} \label{DefMatFact1}
Let $X$ be an algebraic stack and $W \in H^0(X,\C)$ a section. The section $W$ is called the potential.

\begin{defin}
\begin{enumerate}
\item A matrix factorization $\bar{E}=(E_\bullet, \delta_\bullet)$ of $W$ on $X$ consists of a pair of vector bundles, i.e. locally free sheaves of finite rank, $E_0,E_1$ on $X$ together with homomorphisms
\[ \delta_1 : E_1 \to E_0 \quad \text{and} \quad \delta_0 : E_0 \to E_1 \]
such that $\delta_1 \delta_0=W \cdot \id =\delta_0 \delta_1$.

\item The dg-category of matrix factorizations is defined in the following way. Let $\bar{E}, \bar{F}$ be matrix factorizations. Then $\Hcal om_{\MF}(\bar{E}, \bar{F})$ is the $\Z$-graded complex
\begin{gather}
\Hcal om_{\MF}(\bar{E}, \bar{F})^{2n}:=\Hom(E_0, F_0) \oplus \Hom(E_1, F_1),\\
\Hcal om_{\MF}(\bar{E}, \bar{F})^{2n+1}:=\Hom(E_0, F_1) \oplus \Hom(E_1, F_0).
\end{gather}
with differential
\[ df:=\delta_F \circ f-(-1)^{|f|} f \circ \delta_E. \]
\end{enumerate}
\end{defin}

\begin{rem}
Matrix factorizations can be defined for a general line bundle over $X$ (see \cite{PV}) but for the application we have in mind we only need $L=\C$.
\end{rem}

Let $G$ be a linear algebraic group acting on $X$ and assume that $W$ is invariant with respect to the action. Then we define $G$-equivariant matrix factorizations in the following way.

\begin{defin}
A matrix factorization $\bar{E}=(E_\bullet, \delta_\bullet)$ of $W$ on $X$ is $G$-equivariant if $(E_0,E_1)$ are $G$-equivariant vector bundles and $(\delta_0,\delta_1)$ are $G$-invariant. We define $\Hcal om_{\MF_G}(\bar{E}, \bar{F})$ to be the complex
\begin{gather}
\Hcal om_{\MF_G}(\bar{E}, \bar{F})^{2n}:=\Hom(E_0, F_0)^G \oplus \Hom(E_1, F_1)^G,\\
\Hcal om_{\MF_G}(\bar{E}, \bar{F})^{2n+1}:=\Hom(E_0, F_1)^G \oplus \Hom(E_1, F_0)^G.
\end{gather}
The differential is the same as for non-equivariant matrix factorizations.
\end{defin}

Denote the corresponding homotopy categories by $\text{HMF}(X,W) :=H^0(\MF(X,W))$ and $\text{HMF}_G(X,W) :=H^0(\MF_G(X,W))$.

\begin{rem} \label{MFQuotient}
Let $\bar{W}$ denote the induced potential $X/G \to \C$. Then the dg-categories $\MF_G(X,W)$ and $\MF(X/G, \bar{W})$ are equivalent.
\end{rem}

The category HMF$(X,W)$ is a triangulated category. Consider the triangulated subcategory LHZ$(X,W)$ consisting of matrix factorizations $\bar{E}$ that are locally contractible (i.e. there exists an open covering $U_i$ of $X$ in the smooth topology such that $\bar{E}|_{U_i}=0$ in HMF$(U_i, W|_{U_i})$).

\begin{defin}
For a stack $X$ we define the derived category of matrix factorizations by
\[ \operatorname{DMF}(X,W):=\operatorname{HMF}(X,W)/ \operatorname{LHZ}(X,W). \]
\end{defin}

\subsection{Connection with singularity categories}

In this subsection we recall the connection between matrix factorizations and singularity categories as stated by Polishchuk and Vaintrob in \cite{PV}. Let $X$ be an algebraic stack and $W \in H^0(X,\C)$ a potential. Assume that $W$ is not a zero divisor (i.e. the morphism $W : \Ocal_X \to \C$ is injective). Set $X_0 :=W^{-1}(0)$. Then there is a natural functor (see \cite[Section 3]{PV}).
\begin{gather}
\mathfrak{C} : \operatorname{HMF}(X,W) \to D_{sg}(X_0),\\
(E_\bullet, \delta_\bullet) \mapsto \coker(\delta_1 : E_1 \to E_0).
\end{gather}

\begin{defin}
\begin{enumerate}[(i)]
\item $X$ has the resolution property (RP) if for every coherent sheaf $\Fcal$ on $X$ there exists a vector bundle $V$ on $X$ and a surjection $V \to \Fcal$.
\item $X$ has finite cohomological dimension (FCD) if there exists an integer $N$ such that for every quasi-coherent sheaf $\Fcal$ on $X$ one has $H^i(X,\Fcal)=0$ for $i >N$.
\item $X$ is called a FCDRP-stack if it has property (i) and (ii).
\end{enumerate}
\end{defin}

\begin{thm}\cite[Theorem 3.14]{PV} \label{MFSing}
Let $X$ be a smooth FCDRP-stack and $W$ a potential which is not a zero-divisor. Then the functor
\[ \bar{\mathfrak{C}} : \operatorname{DMF}(X,W) \to D_{sg}(X_0) \]
induced by $\mathfrak{C}$ is an equivalence of triangulated categories.
\end{thm}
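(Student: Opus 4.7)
The plan is to verify that $\bar{\mathfrak{C}}$ is well-defined, essentially surjective, and fully faithful, in that order.

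Well-definedness first. From $\delta_0 \delta_1 = W \cdot \operatorname{id}$, multiplication by $W$ on $\coker(\delta_1 : E_1 \to E_0)$ factors through $\delta_1$ and therefore vanishes, so $\coker \delta_1$ is a coherent $\Ocal_{X_0}$-module. Because $W$ is not a zero divisor, the infinite 2-periodic unfolding $\cdots \to E_1 \to E_0 \to E_1 \to E_0$ of $\bar E$, restricted to $X_0$, is a locally free resolution of $\coker \delta_1$; any two sufficiently deep truncations differ by a perfect complex, so the class of $\mathfrak{C}(\bar E)$ in $D_{sg}(X_0)$ depends only on the homotopy class of $\bar E$. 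A contractible matrix factorization makes $\coker \delta_1$ a direct summand of a locally free $\Ocal_{X_0}$-module, hence perfect. To pass from genuine contractibility to the local contractibility that defines $\operatorname{LHZ}$, one uses that (RP)+(FCD) force locally-perfect complexes (in the smooth topology) to be perfect; this is the first place where the FCDRP hypothesis is essential, and it yields $\mathfrak{C}(\operatorname{LHZ}) = 0$ and hence the factored functor $\bar{\mathfrak{C}}$.

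For essential surjectivity, take $\Fcal \in \Coh(X_0)$ and regard it as a coherent $\Ocal_X$-module annihilated by $W$. By (RP), build a bounded-above locally free resolution $V_\bullet \to \Fcal$ on $X$; by (FCD), the syzygy $Z_n := \ker(V_{n-1} \to V_{n-2})$ is a vector bundle for $n \gg 0$. Since $W \cdot \Fcal = 0$, the chain map $W \cdot \operatorname{id}_{V_\bullet}$ is null-homotopic, so one can pick $h_i : V_i \to V_{i+1}$ with $dh + hd = W \cdot \operatorname{id}$. The Eisenbud-type folding construction then assembles two consecutive syzygies $Z_n$ and $Z_{n+1}$, together with the restrictions of $d$ and $h$, into a matrix factorization $\bar E$ with $\mathfrak{C}(\bar E) \cong Z_n$, and $Z_n \cong \Fcal$ in $D_{sg}(X_0)$ because they differ by the perfect complex $V_{n-1} \to \cdots \to V_0$.

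For fully faithfulness, I would compute both Hom groups via the same 2-periodic lifting procedure: a morphism $\mathfrak{C}(\bar E) \to \mathfrak{C}(\bar F)$ in $D_{sg}(X_0)$ can be represented by a degree-zero chain map between the periodic unfoldings on $X$, and two such are equal iff they differ by one factoring through a perfect complex. Under (RP)+(FCD), such lifts always exist and are essentially unique. The main technical obstacle I anticipate is precisely the identification of $\mathfrak{C}^{-1}(\operatorname{Perf}(X_0))$ with $\operatorname{LHZ}(X,W)$ on the nose --- that is, matching the "kernel" of the functor on the matrix-factorization side with the perfect-complex ideal on the singularity-category side. This is where the stacky FCDRP hypothesis is really used, and it is the technical heart of the statement; once in place, both essential surjectivity and fully faithfulness drop out of a routine Hom comparison between the dg-Hom complex of matrix factorizations modulo $\operatorname{LHZ}$ and morphisms of high syzygies in $D_{sg}(X_0)$.
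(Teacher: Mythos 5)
First, a point of orientation: the paper does not prove this statement at all --- it is imported verbatim as \cite[Theorem 3.14]{PV}, so there is no ``paper's own proof'' to compare against; the relevant comparison is with Polishchuk--Vaintrob's argument. Your outline does follow the same Orlov--Eisenbud strategy that underlies their proof (the cokernel functor, the $2$-periodic resolution on $X_0$, stabilization via high syzygies), and your well-definedness paragraph is essentially correct: the exactness of the unfolded complex on $X_0$ does follow from $W$ being a non-zero-divisor, and locally contractible factorizations do land in perfect complexes.

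However, as a proof the proposal has two genuine gaps. The essential surjectivity step is stated imprecisely in a way that would fail as written: if $Z_n$ is a syzygy of a locally free resolution of $i_*\Fcal$ \emph{over} $\Ocal_X$, then $Z_n$ is a vector bundle on $X$ not annihilated by $W$, so it cannot equal $\coker\delta_1$, which lives on $X_0$; moreover the naive folding $\delta = d + h$ satisfies $\delta^2 = W\cdot\id + h^2$, not $W\cdot\id$, so one must either correct $h$ by higher homotopies (totalizing the whole finite resolution) or pass through the maximal Cohen--Macaulay picture, where a high syzygy of $\Fcal$ \emph{over} $\Ocal_{X_0}$ admits a length-one resolution by bundles on $X$ and the second factor $\delta_0$ comes for free from $W$ annihilating the cokernel. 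The second and larger gap is full faithfulness: you explicitly defer it to ``a routine Hom comparison,'' but this is precisely the part of \cite{PV} that consumes most of their Section 3 --- one needs to identify the kernel of $\bar{\mathfrak{C}}$ with $\operatorname{LHZ}(X,W)$ and to control morphisms in $D_{sg}(X_0)$ by $\Ext$'s between high syzygies, which is where (FCD) is genuinely used to make the syzygy comparison stabilize. Naming the obstacle is not the same as overcoming it, so the proposal should be regarded as a correct roadmap rather than a proof.
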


\begin{prop} \label{FCDRP}
Let $U$ be a Noetherian scheme and $G$ a reductive algebraic group acting on it. Assume that $U$ has an ample family of $G$-equivariant line bundles. Then the quotient stack $U/G$ is a FCDRP-stack.
\end{prop}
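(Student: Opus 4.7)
The plan is to verify the two defining properties of a FCDRP-stack for $U/G$ separately, working throughout under the identifications $\QCoh(U/G) \iso \QCoh^G(U)$ and $\Coh(U/G) \iso \Coh^G(U)$. The resolution property will come directly from the data of an ample family of $G$-equivariant line bundles, and finite cohomological dimension will follow by combining reductivity of $G$ with a standard bound on the cohomological dimension of $U$ itself.

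\textbf{Resolution property.} Given $\Fcal \in \Coh^G(U)$, I would invoke the defining property of an ample family of $G$-equivariant line bundles (see \cite[Definition 1.5.3]{VV}): for each point $u \in U$ there exist a line bundle $L$ from the family, an integer $n>0$, and a $G$-invariant section $s \in \Gamma(U, L^{\otimes n})^G$ such that $U_s$ is an affine open neighborhood of $u$ on which $\Fcal$ is generated by $G$-invariant sections of $\Fcal \otimes L^{\otimes n}$. Each such generator extends to an equivariant morphism $L^{-n} \to \Fcal$. Because $U$ is Noetherian and $\Fcal$ is coherent, finitely many such morphisms already generate $\Fcal$ globally, and their direct sum produces the desired surjection from an equivariant vector bundle onto $\Fcal$.

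\textbf{Finite cohomological dimension.} Under the identification with equivariant sheaves, the global sections functor on $U/G$ factors as $\Fcal \mapsto \Gamma(U, \Fcal)^G$. Since $G$ is reductive, the invariants functor $(-)^G$ is exact on rational $G$-representations; equivalently, the averaging functor right adjoint to the forgetful functor $\QCoh^G(U) \to \QCoh(U)$ is exact, so the forgetful functor preserves injectives. Either viewpoint produces a natural isomorphism
\[ H^i(U/G, \Fcal) \iso H^i(U, \Fcal)^G \]
for all $\Fcal \in \QCoh^G(U)$. The ample family also exhibits $U$ as a Noetherian scheme admitting a basis of affine opens $U_s$, hence by Noetherianness a finite affine cover; a \v{C}ech computation on this finite cover yields an integer $N$, depending only on $U$, with $H^i(U, -) = 0$ on $\QCoh(U)$ for $i > N$. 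Passing to invariants gives the same uniform vanishing $H^i(U/G, -) = 0$ for $i > N$, completing the FCD property.

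\textbf{Main obstacle.} The most delicate step in this plan is the identification $H^i(U/G, \Fcal) \iso H^i(U, \Fcal)^G$: this rests on the forgetful functor sending injective objects of $\QCoh^G(U)$ to sheaves that are acyclic for $\Gamma(U,-)$, which in turn uses reductivity of $G$ in an essential way. Once that identification is in place, the remainder of the argument is essentially a transcription of the non-equivariant resolution and cohomological-dimension statements for Noetherian schemes with an ample family of line bundles, adapted equivariantly via the $G$-invariance of the sections in the family.
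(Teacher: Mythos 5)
The paper itself offers no argument here: its ``proof'' is the single line ``See [PV, Section~3],'' so your proposal is being measured against the standard argument that citation points to, and in outline you have reconstructed exactly that argument (resolution property from the equivariant ample family, finite cohomological dimension from exactness of $(-)^G$ for reductive $G$ in characteristic zero together with a finite semi-separated affine cover by the $U_s$). The FCD half of your write-up is correct as stated.

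There is, however, one concrete gap in the resolution-property half. The definition of an ample family of $G$-equivariant line bundles gives you a $G$-\emph{invariant} section $s\in\Gamma(U,L^{\otimes n})^G$ with $U_s$ affine, but it does \emph{not} give you that $\Fcal$ is generated over $U_s$ by $G$-invariant sections of $\Fcal\otimes L^{\otimes m}$, and this is false in general: already for $U$ a point and $\Fcal$ a nontrivial character of $G$ there are no nonzero invariant sections at all, so the invariant sections cannot generate. Consequently the step ``each such generator extends to an equivariant morphism $L^{-n}\to\Fcal$'' has nothing to apply to. The repair is standard: ampleness gives ordinary (non-invariant) global sections of $\Fcal\otimes L^{\otimes m}$ generating $\Fcal$ over $U_s$ for $m\gg 0$; since $\Gamma(U,\Fcal\otimes L^{\otimes m})$ is a rational $G$-module, these finitely many sections lie in a finite-dimensional $G$-stable subspace $V$, and the resulting equivariant evaluation map $V\otimes_{\C} L^{-m}\to\Fcal$ is surjective near $u$. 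Noetherianness then lets you take a finite direct sum of such $V\otimes L^{-m}$ surjecting onto $\Fcal$; note the source must be an equivariant bundle of the form $V\otimes L^{-m}$ with $V$ a $G$-representation, not a sum of copies of $L^{-m}$ itself. With that correction your argument goes through and supplies the proof the paper leaves to the reference.
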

\begin{proof}
See \cite[Section 3]{PV}.
\end{proof}

\section{Application to the setting from Isik}

Recall the notation from section \ref{Aside}: $X$ is a smooth complex algebraic variety with an action of a reductive algebraic group $G$. We also have a $G$-equivariant vector bundle  $\pi : E \to X$ of rank $n$. Its sheaf of $G$-equivariant sections is denoted by $\Ecal$. Let $s \in H^0(X, \Ecal)$ be a regular $G$-equivariant section. The zero scheme of $s$ is denoted by $Y$. We also defined the function.
\[ W: E^\vee \to \C, \qquad a_x \mapsto \langle ((\pi^\vee)^*s)(a_x), a_x \rangle. \]
We assume that $W$ is not a zero divisor. The section $s$ is $G$-equivariant by assumption and the pairing is $G$-invariant so $W$ is $G$-invariant. The $\C^*$-action is given by dilation of the fibers and $W$ is also invariant with respect to this action. Thus, $W$ factors through the quotient
\[ \xymatrix{E^\vee \ar[rr]^W \ar[rd]& & \C \\ & E^\vee/(G \times \C^*) \ar[ru]_{\overline{W}} &} \]
For any linear algebraic group $K$ acting on a scheme $V$ there is an equivalence $\Coh^{K}(V) \iso \Coh(V/K)$. This induces equivalences
\[D^{G \times \C^*}_{sg}(W^{-1}(0)) \iso D_{sg}(W^{-1}(0)/(G \times \C^*)) \iso D_{sg}(\overline{W}^{-1}(0)).\]

\begin{prop}
With the above assumptions there is an equivalence of categories
\[\operatorname{DMF}(E^\vee/(G \times \C^*),\bar{W}) \iso D_{sg}(\overline{W}^{-1}(0)).\]
\end{prop}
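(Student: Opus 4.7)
The plan is to apply Theorem~\ref{MFSing} of Polishchuk--Vaintrob directly to the quotient stack $\mathcal{X} := E^\vee/(G \times \C^*)$ equipped with the descended potential $\bar{W}$. This requires verifying only two hypotheses: that $\mathcal{X}$ is a smooth FCDRP-stack, and that $\bar{W}$ is not a zero-divisor. Once these are in hand, the equivalence follows immediately and tautologically from Theorem~\ref{MFSing}, since the stack $\overline{W}^{-1}(0)$ is by construction the zero fiber of $\bar{W}$.

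Smoothness of $\mathcal{X}$ is immediate: $E^\vee$ is the total space of a vector bundle over the smooth variety $X$, hence itself smooth, and $G \times \C^*$ is a smooth algebraic group acting on it, so the quotient stack is smooth. For the FCDRP property, I invoke Proposition~\ref{FCDRP} with $U = E^\vee$ and reductive group $G \times \C^*$. The scheme $E^\vee$ is Noetherian, being a vector bundle over the Noetherian scheme $X$, and $G \times \C^*$ is indeed reductive as a product of reductive groups. The only nontrivial input is exhibiting an ample family of $(G \times \C^*)$-equivariant line bundles on $E^\vee$: since $X$ carries an ample family of $G$-equivariant line bundles, as noted at the start of Section~\ref{Aside}, one pulls these back along the affine morphism $\pi^\vee : E^\vee \to X$ and equips each with the trivial $\C^*$-action (or any fixed weight). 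Ampleness is preserved under pullback along an affine morphism, and the $(G \times \C^*)$-equivariant structure is built in because $\C^*$ acts fiberwise.

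The zero-divisor condition for $\bar{W}$ reduces to the corresponding hypothesis on $W$: the quotient morphism $q : E^\vee \to E^\vee/(G \times \C^*)$ is faithfully flat, so $\bar{W}$ is a zero-divisor in $\Ocal_\mathcal{X}$ if and only if its faithfully flat pullback $q^*\bar{W} = W$ is a zero-divisor in $\Ocal_{E^\vee}$, and the latter is excluded by assumption. Combining these verifications, Theorem~\ref{MFSing} produces the desired equivalence.

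The argument is short because all of the technical content is absorbed into Theorem~\ref{MFSing} and Proposition~\ref{FCDRP}; the only step that deserves any attention is checking the ample family of $(G \times \C^*)$-equivariant line bundles, and even this is a routine pullback-and-extend-by-trivial-character argument. I do not anticipate a genuine obstacle.
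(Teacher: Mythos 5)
Your proposal is correct and follows essentially the same route as the paper: verify smoothness of $E^\vee/(G \times \C^*)$, produce an ample family of $G \times \C^*$-equivariant line bundles on $E^\vee$ by pulling back the family from $X$ along $\pi^\vee$, invoke Proposition~\ref{FCDRP} to get the FCDRP property, and then apply Theorem~\ref{MFSing}. Your explicit check that the zero-divisor condition descends to $\bar{W}$ via faithful flatness is a small addition the paper leaves implicit (it simply assumes $W$ is not a zero divisor earlier in the section), but it does not change the argument.
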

\begin{proof}
We need to show that the assumptions of theorem \ref{MFSing} are satisfied. By assumption $X$ is smooth, hence $E^\vee$ is smooth so the stack $E^\vee /(G \times \C^*)$ is smooth. The variety $X$ has an ample family of $G \times \C^*$-equivariant line bundles. The pull-back of such a family along the $G \times \C^*$-equivariant bundle map $\pi^\vee : E^\vee \to X$ is an ample family of $G \times \C^*$-equivariant line bundles on $E^\vee$. It then follows from proposition \ref{FCDRP} that $E^\vee/(G \times \C^*)$ is a FCDRP-stack.
\end{proof}

By remark \ref{MFQuotient} we have
\[ \operatorname{DMF}(E^\vee/(G \times \C^*),\overline{W}) \iso \operatorname{DMF}_{G \times \C^*}(E,^\vee W). \]
Combining these equivalences with corollary \ref{ZCoh} we obtain

\begin{thm}\label{DMFY}
There is an equivalence of categories
\[ \operatorname{DMF}_{G \times \C^*}(E^\vee,W) \iso D^b(\Coh^G(Y)). \]
\end{thm}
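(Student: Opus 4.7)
The theorem is the capstone of Sections 4--6, so the plan is to string together three equivalences already proved: the Polishchuk--Vaintrob comparison between matrix factorizations and singularity categories (Theorem \ref{MFSing}), the translation between equivariant categories and categories on quotient stacks, and the equivariant Isik-type result (Theorem \ref{ZCoh}).

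First I would apply Remark \ref{MFQuotient} to identify
\[ \operatorname{DMF}_{G\times\C^*}(E^\vee,W)\iso\operatorname{DMF}(E^\vee/(G\times\C^*),\overline{W}), \]
where $\overline{W}$ is the descended potential on the stack quotient. To then apply Theorem \ref{MFSing} on the right-hand side, I must verify three hypotheses: that the stack $E^\vee/(G\times\C^*)$ is smooth, that it is an FCDRP-stack, and that $\overline{W}$ is not a zero divisor. Smoothness is immediate since $X$ is smooth and $\pi^\vee\colon E^\vee\to X$ is a vector bundle, and the $(G\times\C^*)$-action is free up to a smooth stabilizer contribution from the linear $\C^*$-scaling. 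The FCDRP property follows from Proposition \ref{FCDRP} together with the observation that an ample family of $G$-equivariant line bundles on $X$ pulls back along $\pi^\vee$ to an ample family of $(G\times\C^*)$-equivariant line bundles on $E^\vee$, where one upgrades the $\C^*$-equivariant structure trivially along the fibers. The non-zero-divisor assumption on $W$ was built into the hypotheses of the section and descends to $\overline{W}$ since the quotient map is faithfully flat. This yields
\[ \operatorname{DMF}(E^\vee/(G\times\C^*),\overline{W})\iso D_{sg}(\overline{W}^{-1}(0)). \]

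Next I would unpack the quotient via the standard equivalence $\Coh^K(V)\iso\Coh(V/K)$, which restricts to an equivalence on bounded coherent categories, preserves perfect complexes, and therefore descends to singularity categories. This supplies
\[ D_{sg}(\overline{W}^{-1}(0))\iso D^{G\times\C^*}_{sg}(W^{-1}(0))=D^{G\times\C^*}_{sg}(Z). \]
Finally, Theorem \ref{ZCoh}, which is the equivariant extension of Isik's theorem proved in Section \ref{Aside}, gives the last link
\[ D^{G\times\C^*}_{sg}(Z)\iso D^b(\Coh^G(Y)), \]
and composing the chain produces the desired equivalence.

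Since all the heavy lifting has been done in the preceding sections (equivariant linear Koszul duality, the equivariant Isik theorem, and the Polishchuk--Vaintrob comparison), I expect essentially no real obstacle in this final step. The only mildly delicate points are the verifications needed to invoke Theorem \ref{MFSing}, most notably the FCDRP property for $E^\vee/(G\times\C^*)$ and the descent of the non-zero-divisor property of $W$ to $\overline{W}$, both of which reduce to the ample-family pullback argument and faithful flatness respectively.
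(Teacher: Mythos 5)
Your proposal is correct and follows essentially the same route as the paper: Remark \ref{MFQuotient} to pass to the quotient stack, the Polishchuk--Vaintrob comparison (Theorem \ref{MFSing}) after checking smoothness and the FCDRP property via Proposition \ref{FCDRP} and the pulled-back ample family, the identification $D_{sg}(\overline{W}^{-1}(0))\iso D^{G\times\C^*}_{sg}(Z)$, and finally Theorem \ref{ZCoh}. The only differences are cosmetic elaborations (e.g.\ the faithful-flatness remark on the non-zero-divisor descent), which the paper leaves implicit.
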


\subsection{Extension to an arbitrary linear algebraic group}
In this section we extend theorem \ref{DMFY} to arbitrary linear algebraic groups. Let $X$ be a smooth complex algebraic variety with an action of a linear algebraic group $H$. Let $\pi: E \to X$ be a $H$-equivariant vector bundle and $s$ a $H$-equivariant regular section. Let $Y$ denote the zero section of $s$ and let $W$ be the function
\[ W: E^\vee \to \C, \qquad a_x \mapsto \langle ((\pi^\vee)^*s)(a_x), a_x \rangle. \]

\begin{thm} \label{ThmLinAlg}
With the above assumptions there is an equivalence of categories
\[ \operatorname{DMF}_{\C^*}(E^\vee/H,\bar{W}) \iso D^b(\Coh^H(Y)). \]
\end{thm}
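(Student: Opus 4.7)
The plan is to reduce to the reductive case (Theorem \ref{DMFY}) via the induction-of-equivariance trick already employed in Section \ref{SecEquiKoszulDual} to extend linear Koszul duality from reductive to arbitrary linear algebraic groups.

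First I would embed $H$ as a closed subgroup of some reductive group $G$ (for instance $GL_N$), and form the induced $G$-variety $\tilde{X} := \Ind_H^G X = (G \times X)/H$. The monoidal equivalence
\[ B^{-1}A : \QCoh^H(X) \isomap \QCoh^G(\tilde{X}) \]
from Section \ref{SecEquiKoszulDual} transports the $H$-equivariant data $(\pi : E \to X,\, s,\, Y,\, \pi^\vee : E^\vee \to X)$ to $G$-equivariant data $(\tilde{\pi} : \tilde{E} \to \tilde{X},\, \tilde{s},\, \tilde{Y},\, \tilde{\pi}^\vee : \tilde{E}^\vee \to \tilde{X})$ with $\tilde{Y} = \Ind_H^G Y$. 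Since the potential $W$ is assembled from the section and the natural pairing $E^\vee \times_X E \to \C$ using only monoidal operations, it corresponds to a $G$-invariant (and fiberwise $\C^*$-invariant) potential $\tilde W$ on $\tilde{E}^\vee$.

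Next I would apply Theorem \ref{DMFY} to the tuple $(G, \tilde{X}, \tilde{E}, \tilde{s})$. Its hypotheses are met: $\tilde X$ is smooth because $G$ and $X$ are smooth and $H$ acts freely on $G \times X$, so the reference cited at the start of Section \ref{Aside} provides an ample family of $G$-equivariant line bundles on $\tilde X$, and hence Property \eqref{Assump}. The theorem yields
\[ \operatorname{DMF}_{G \times \C^*}(\tilde{E}^\vee, \tilde{W}) \iso D^b(\Coh^G(\tilde{Y})). \]
Applying the induction equivalence to $Y$ identifies the right-hand side with $D^b(\Coh^H(Y))$; and since $B^{-1}A$ preserves locally free sheaves and $\Hom$, it upgrades to an equivalence of the corresponding dg-categories of matrix factorizations, producing
\[ \operatorname{DMF}_{G \times \C^*}(\tilde{E}^\vee, \tilde{W}) \iso \operatorname{DMF}_{H \times \C^*}(E^\vee, W) \iso \operatorname{DMF}_{\C^*}(E^\vee/H, \bar{W}), \]
where the last step is Remark \ref{MFQuotient}. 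Composing these equivalences delivers the theorem.

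The main technical obstacle will be verifying that the monoidal equivalence $B^{-1}A$, originally set up on quasi-coherent sheaves, really intertwines the construction of equivariant matrix factorizations together with their potential. Formally this should work because all the constituents --- the vector bundles, their duals, the natural pairing, the regular section, the $\Z/2$-grading, and the matrix-factorization differential --- are built from pull-back, tensor product, and internal $\Hom$, all of which $B^{-1}A$ respects. A secondary routine point is that the FCDRP hypothesis invoked through Theorem \ref{MFSing} inside Theorem \ref{DMFY} transfers from $(X, H)$ to $(\tilde X, G)$, which again follows from the ample family argument above.
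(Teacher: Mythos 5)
Your proposal is correct and follows essentially the same route as the paper: embed $H$ in a reductive $G$, pass to the induced data $(G\times X)/H$, $(G\times E)/H$, $(G\times E^\vee)/H$ with the induced section and potential, apply Theorem \ref{DMFY} there, and then identify $D^b(\Coh^G((G\times Y)/H))\iso D^b(\Coh^H(Y))$ and $\operatorname{DMF}_{G\times\C^*}((G\times E^\vee)/H, W_G)\iso \operatorname{DMF}_{\C^*}(E^\vee/H,\bar W)$ via the quotient-stack identification. The paper just makes the transported bundle, section, and the identity $W_G(g,e)=W(e)$ explicit rather than invoking the monoidal equivalence $B^{-1}A$ abstractly.
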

\begin{proof}
Embed $H$ into a reductive algebraic group $G$. Then $H$ acts freely on $G \times X$ by $h \cdot (g,x):=(gh^{-1}, hx)$. Consider the quotient by this action. The morphism
\[ \pi_G : \tfrac{G \times E}{H} \to \tfrac{G \times X}{H}, \qquad (g,e) \mapsto (g, \pi(e)). \]
is a $G$-equivariant vector bundle. Consider the section
\[ s_G :  \tfrac{G \times X}{H} \to \tfrac{G \times E}{H}, \qquad (g,x) \mapsto (g, s(x)).\]
The zero scheme for this section is $\tfrac{G \times Y}{H} =: Y_G$. We have the corresponding function.
\[ W_G : \left( \tfrac{G \times E}{H} \right)^\vee =\tfrac{G \times E^\vee}{H} \stackrel{\id \times s_G}{\longrightarrow} \tfrac{G \times E^\vee}{H} \times_{\tfrac{G \times X}{H}} \tfrac{G \times E}{H} \stackrel{\langle \;, \; \rangle}{\longrightarrow} \C. \]
Notice that $W_G(g,e)=W(e)$ for all $(g,e) \in \tfrac{G \times E}{H}$. Inserting this into theorem \ref{DMFY} gives an equivalence
\[ \operatorname{DMF}_{G \times \C^*}(\tfrac{G \times E^\vee}{H},W_G) \iso D^b(\Coh^G(Y_G)). \]
Notice that $D^b(\Coh^G(Y_G)) \iso D^b(\Coh^H(Y))$ and that
\begin{align} 
\operatorname{DMF}_{G \times \C^*}(\tfrac{G \times E^\vee}{H},W_G) & \iso \operatorname{DMF}_{\C^*}(\tfrac{G \times E^\vee}{H} {\big/} G,\bar{W}_G)\\
& \iso \operatorname{DMF}_{\C^*}(E^\vee / H,\bar{W}).
\end{align}
This finishes the proof.
\end{proof}

\subsection{Application to Hamiltonian reduction}
Let $X$ be a smooth complex algebraic variety with a free action of a linear algebraic group $G$ such that the quotient $Y:=X/G$ is a scheme. The moment map $\mu : T^*X \to \g^*$ provides a $G$-equivariant section of  the trivial vector bundle $\pi : T^* X \times \g^* \to T^*X$. The potential $W$ is the composition
\[ W : T^*X \times \g \stackrel{\mu \times \id}{\longrightarrow} \g^* \times \g \stackrel{\langle \:,\: \rangle}{\longrightarrow} \C.\]
Theorem \ref{ThmLinAlg} gives an equivalence of categories between matrix factorizations and the Hamiltonian reduction
\[ \operatorname{DMF}_{\C^*}((T^*X \times \g)/G, \bar{W}) \iso D^b(\Coh(\mu^{-1}(0)/G)). \]
Write $\mu^{-1}(0)$ as $T^*X \times_{\g^*} 0$. Let $p : X \to X/G$ be the quotient morphism and $\{U_i\}$ a trivialization. Then
\begin{align}
T^*(p^{-1}(U_i)) \times_{\g^*} 0 \iso (G \times \g^* \times T^*U_i) \times_{\g^*} 0 \iso G \times T^*U_i.
\end{align}
This shows that $\mu^{-1}(0)/G \iso T^*(X/G)$. In particular, we proved that

\begin{thm}
There is an equivalence of categories
\[ \operatorname{DMF}_{\C^*}((T^*X \times \g)/G,\bar{W}) \iso D^b(\Coh(T^*(X/G))). \]
\end{thm}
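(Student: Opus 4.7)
The plan is to apply Theorem \ref{ThmLinAlg} to the specific geometric data provided by Hamiltonian reduction, and then reduce the resulting statement to the classical identification $\mu^{-1}(0)/G \iso T^*(X/G)$ valid when $G$ acts freely.

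First, I would specialize Theorem \ref{ThmLinAlg} by taking the base variety to be $T^*X$, the $G$-equivariant vector bundle to be the trivial bundle $\pi : T^*X \times \g^* \to T^*X$, and the regular $G$-equivariant section to be the moment map $\mu : T^*X \to \g^*$. The dual bundle is then $T^*X \times \g$, and the potential determined by the canonical pairing $\g^* \times \g \to \C$ is exactly the function $\bar W$ in the statement. Theorem \ref{ThmLinAlg} therefore provides an equivalence
\[ \operatorname{DMF}_{\C^*}((T^*X \times \g)/G, \bar{W}) \iso D^b(\Coh^G(\mu^{-1}(0))). \]

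Second, since the $G$-action on $X$ is free, it is free on $T^*X$ and on the closed subscheme $\mu^{-1}(0) \subseteq T^*X$, and the quotient is a scheme. Under this hypothesis the standard descent equivalence identifies $D^b(\Coh^G(\mu^{-1}(0)))$ with $D^b(\Coh(\mu^{-1}(0)/G))$, so it suffices to prove the geometric statement $\mu^{-1}(0)/G \iso T^*(X/G)$. This is verified by choosing an open cover $\{U_i\}$ of $X/G$ which trivializes the principal $G$-bundle $p : X \to X/G$, so that $p^{-1}(U_i) \iso G \times U_i$. In the left trivialization, $T^*(p^{-1}(U_i)) \iso G \times \g^* \times T^*U_i$, the moment map becomes the projection to the $\g^*$-factor, and so $\mu^{-1}(0) \cap T^*(p^{-1}(U_i)) \iso G \times T^*U_i$. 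Quotienting by the diagonal $G$-action, which is free on the first factor, produces $T^*U_i$. These local isomorphisms patch together to a global isomorphism of schemes $\mu^{-1}(0)/G \iso T^*(X/G)$. Combining this with the first step yields the claimed equivalence.

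The main obstacle will be making the local computation above into a genuine scheme-theoretic statement rather than a set- or topologically-set-theoretic one: one must check that the standard left trivialization of $T^*G \iso G \times \g^*$ really intertwines the moment map with the projection to $\g^*$ (with matching sign conventions for $\mu$), and that the patching maps between two trivializations $p^{-1}(U_i) \iso G \times U_i$ respect the identifications and therefore glue to a well-defined morphism $\mu^{-1}(0)/G \to T^*(X/G)$ which is an isomorphism on each chart. Everything else is a direct application of results already established in the paper.
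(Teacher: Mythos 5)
Your proposal is correct and follows essentially the same route as the paper: specialize Theorem \ref{ThmLinAlg} to the trivial bundle $T^*X \times \g^* \to T^*X$ with the moment map as section, and then identify $\mu^{-1}(0)/G$ with $T^*(X/G)$ via local trivializations of the principal bundle $p : X \to X/G$, using the same computation $T^*(p^{-1}(U_i)) \times_{\g^*} 0 \iso G \times T^*U_i$. Your closing caveat about making the chart-wise identification genuinely scheme-theoretic and checking compatibility of the gluing is a reasonable point of care that the paper itself glosses over, but it does not change the argument.
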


\begin{rem}
When $X$ is a reductive algebraic group $G$ and the linear algebraic group from the theorem is a Borel subgroup $B$ then we get the Springer resolution $\niltil \iso T^*(G/B)$. Thus, we get $ \operatorname{DMF}_{\C^*}((T^*G \times \bfr)/B,\bar{W}) \iso D^b(\Coh^G(\niltil))$. 
\end{rem}

\end{document}